\DeclareMathOperator*{\argmin}{argmin}
\newtheorem{definition}{Definition}
\newtheorem{theorem}{Theorem}
\newtheorem{lemma}{Lemma}
\newtheorem{proposition}{Proposition}
\newcommand{\indic}{\mathbbm{1}}
\newcommand{\R}{\mathbb{R}}
\newcommand{\PowerSet}[1]{2^{#1}}
\newcommand{\N}{\mathbb{N}}
\newcommand{\Links}{\mathcal{A}}
\newcommand{\Vertices}{\mathcal{V}}
\newcommand{\Graph}{G}
\newcommand{\linkOne}{\ell}
\newcommand{\linkTwo}{\linkOne'}
\newcommand{\destination}{d}
\newcommand{\SetOfTime}{\mathcal{T}}
\newcommand{\EndTime}{T}
\newcommand{\waitingTime}{w}
\newcommand{\runCost}{r}
\newcommand{\travelTime}{c}
\newcommand{\Vehicles}{\mathcal{N}}
\newcommand{\NumOfPlayer}{N}
\newcommand{\vehicle}{i}
\newcommand{\strategy}{\pi}
\newcommand{\Strategies}{\Pi}
\newcommand{\policy}{\boldsymbol{\strategy}}
\newcommand{\Policies}{\boldsymbol{\Strategies}}
\newcommand{\policyProfile}{\underline{\policy}}
\newcommand{\policyProfiles}{\underline{\Policies}}
\newcommand{\RestrictedPolicies}{\widetilde{\Policies}}
\newcommand{\RestrictedPolicyProfiles}{\widetilde{\policyProfiles}}
\newcommand{\cost}{J}
\newcommand{\deviationIncentive}{D}
\newcommand{\averageDeviationIncentive}{\overline{\deviationIncentive}}
\newcommand{\StateDistribution}{m}
\newcommand{\playerState}{x}
\newcommand{\playerLocation}{\ell}
\newcommand{\gameLocation}{\underline{\playerLocation}}
\newcommand{\playerStates}{\mathcal{X}}
\newcommand{\playerTraj}{\boldsymbol{\playerStates}}
\newcommand{\gameStates}{\underline{\playerStates}}
\newcommand{\gameTraj}{\underline{\boldsymbol{\playerStates}}}
\newcommand{\ProbabilityMeasureSet}[1]{\mathcal{P}\left({#1}\right)}
\newcommand{\expectation}[2]{\mathbb{E}_{#1}\left[#2\right]}
\newcommand{\randomPlayerState}{X}
\newcommand{\randomPlayerLocation}{L}
\newcommand{\randomPlayerWaitingTime}{W}
\newcommand{\randomPlayerRouting}{U}
\newcommand{\randomPlayerDestination}{D}
\newcommand{\randomPlayerStateTraj}{\boldsymbol{\randomPlayerState}}
\newcommand{\randomPlayerWaitingTimeTraj}{\boldsymbol{\randomPlayerWaitingTime}}
\newcommand{\randomPlayerRoutingTraj}{\boldsymbol{\randomPlayerRouting}}
\newcommand{\randomGameState}{\underline{\randomPlayerState}}
\newcommand{\randomGameLocation}{\underline{\randomPlayerLocation}}
\newcommand{\randomGameWaitingTime}{\underline{\randomPlayerWaitingTime}}
\newcommand{\randomGameDestination}{\underline{\randomPlayerDestination}}
\newcommand{\randomGameStateTraj}{\underline{\randomPlayerStateTraj}}
\newcommand{\randomGameRoutingTraj}{\underline{\randomPlayerRoutingTraj}}
\newcommand{\linksDistribution}{\nu}
\newcommand{\linksDistributionTraj}{\boldsymbol{\linksDistribution}}
\newcommand{\pathsIndices}{\mathcal{K}}
\newcommand{\SetOfPaths}{Q}
\renewcommand{\path}{q}
\title{Solving N-player dynamic routing games with congestion: a mean field approach}
\author[1, 2, 3]{Theophile Cabannes}
\author[5]{Mathieu Lauri{\`e}re}
\author[6]{Julien Perolat}
\author[5]{Raphael Marinier}
\author[5]{Sertan Girgin}
\author[7]{Sarah Perrin}
\author[5]{Olivier Pietquin}
\author[1]{Alexandre M. Bayen}
\author[3]{Eric Goubault}
\author[6]{Romuald Elie}
\affil[1]{University of California, Berkeley}
\affil[2]{Google Research}
\affil[3]{Laboratoire d’Informatique de l’Ecole Polytechnique, Ecole Polytechnique} 
\affil[5]{Google Research, Brain team}
\affil[6]{DeepMind}
\affil[7]{Univ. Lille, CNRS, Inria, Centrale Lille, UMR 9189 CRIStAL}
\date{October 2021}
\begin{document}

\maketitle

\begin{abstract}
The recent emergence of navigational tools has changed traffic patterns and has now enabled new types of congestion-aware routing control like dynamic road pricing.
Using the fundamental diagram of traffic flows -- applied in macroscopic and mesoscopic traffic modeling -- the article introduces a new $\NumOfPlayer$-player dynamic routing game with explicit congestion dynamics.
The model is well-posed and can reproduce heterogeneous departure times and congestion spill back phenomena.
However, as Nash equilibrium computations are PPAD-complete, solving the game becomes intractable for large but realistic numbers of vehicles $\NumOfPlayer$.
Therefore, the corresponding mean field game is also introduced. 
Experiments were performed on several classical benchmark networks of the traffic community: the Pigou, Braess, and Sioux Falls networks with heterogeneous origin, destination and departure time tuples.
The Pigou and the Braess examples reveal that the mean field approximation is generally very accurate and computationally efficient as soon as  
the number of vehicles exceeds a few dozen.  
On the Sioux Falls network (76 links, 100 time steps), this approach enables learning traffic dynamics with more than 14,000 vehicles.
\end{abstract}

\section{Introduction}

\subsection{Motivations}
In 2019, the Texas A\&M Transportation Institute estimated that the U.S. loses \$166 billion per year due to the impact of congestion on fuel usage and productivity loss \cite{TTI2019urban}. 
The average auto commuter spends 54 hours in congestion and wastes 21 gallons of fuel every year due to congestion at a cost of \$1,010 in wasted time and fuel.
With the emergence of navigational applications, the traffic patterns have evolved due to congestion-aware routing behaviors \cite{cabannes2018impact}.
Being able to model traffic and especially routing choice adequately would enable traffic control to leverage the new routing behaviors in order to improve the network efficiency.

However, solving realistic routing choice problems requires the consideration of large scale multi-agent systems, where the number of vehicles making strategic decisions is not tractable within classical algorithms \cite{daskalakis2009complexity}.
This work focuses on finding a scalable approach to model the routing behavior of each vehicle in a dynamic road traffic environment as a large multi-agent dynamic system.

\subsection{Background}

\subsubsection{Dynamic traffic assignment}

Dynamic traffic simulations are used to model the evolution of the locations of the vehicles in the road network across time.
Within dynamic models, simulations are divided between macro, meso and microsimulations.
Microsimulations model each individual vehicle and its interaction with others.
At the other extreme, macrosimulations model traffic dynamics based on the flow of cars. 
Mesosimulations also model individual vehicles, but only consider the interactions between each vehicle and the flow (or volume) of vehicles.
In general, this simulations have no spatial resolution on links  
and are mostly discrete-event models.
In this simulations, a vehicle is slowed down if the volume of vehicles on the road section is large.
This is modeled using fundamental diagrams of the traffic flow \cite{siebel2006fundamental} -- a congestion function mapping the volume on each link to the link travel time.

In routing models, vehicles are routed based on the notion of Wardrop equilibrium \cite{Wardrop1952},
in which the travel times of every used path between the same origin and the same destination for a given departure time are equal and smaller than the one on any unused path. 
To find the route assignment that leads to a Wardrop equilibrium, routing models are divided between one-shot assignments (that assign the routes once for all using a stochastic model and the current traffic information) and iterative assignments (that assign the routes, run a simulation, access gap to the user equilibrium and update the routes accordingly until convergence).
To assess how far a traffic state is from the user equilibrium, the relative gap to the user equilibrium is considered \cite{chiu2011dynamic}. 
In fact, the Wardrop equilibrium condition interprets as a Nash equilibrium condition between the vehicles \cite[chapter 18]{Nisam2007} while the relative gap to the user equilibrium is similar to the average deviation incentive or the average marginal regret \cite{Cabannes2019regret} over the vehicles.

The most popular routing game model is arguably the non-atomic static routing game as presented e.g. in~\cite[chapter 18]{Nisam2007}.
This non-atomic game is a potential game \cite{monderer1996potential} that can be solved through convex programming \cite{Rosenthal1973, Patriksson2015}, enabling fast computation of the route choices for large networks.
In the static setting, vehicles try to minimize their travel cost by choosing a path from their origin to their destination. 
Path choices are converted into volume on each path, which translates into a volume on each link. Link volume are converted to link cost using congestion functions \cite[Table 1.1]{Patriksson2015}.

In static routing games, the congestion on each link does not evolve.  
These games cannot replicate dynamic phenomena like departure time choice and congestion spill back, and dynamic extensions of the static routing game have been introduced.
First, using the potential formulation of the static traffic assignment, dynamic traffic assignment has been defined as the solution of a dynamical variational inequality \cite{friesz1993}.
A similar approach allows to consider both routing choices and departure times \cite{han2013existence}.   
However, in such dynamical models, the game theoretical aspect is not explicit.
In parallel, dynamic routing games have been defined \cite{wie1993differential}, and their resolution via multi-agent reinforcement learning has been studied \cite{Shou2020}.
Nevertheless, current multi-agent learning algorithms do not scale in terms of population size 
\cite{daskalakis2009complexity}.
This inherently calls for new methods to study routing choice problems with very a large number of vehicles.
We next explain how a Mean field game perspective can help making an important step in this direction.

\subsubsection{Mean field game}

Mean field games (MFGs) have been introduced in \cite{MR2295621,MR2346927-HuangCainesMalhame-2006-closedLoop} to model differential game dynamics between infinitely many players with symmetric interactions.  
Similar to mean-field theory from statistical physics, the key idea is to use a macroscopic approximation of a large population with anonymous and symmetric players.
When the population is infinite, each player has no influence on the population distribution.
So in an MFG, one does not need to study the pairwise interactions between all players but simply the interaction between a representative infinitesimal player and the full population distribution of states (and possibly actions). Because players are assumed to be identical, it suffices to determine the strategy of a representative player in response to the full population behavior.
From a mathematical point of view, the solution of an MFG can be characterized by a coupled system of a forward equation for the population evolution and a backward dynamic programming one for the player's value function. This system is easier to solve than the Nash equilibrium of a finite-player dynamic game with a large number of players, which leads to a large system of coupled Bellman equations.
Intuitively, this is true when representing each player is more costly than representing a distribution over all possible states, which is the case for instance when the number of players is larger than the number of possible states.

Applications of MFGs include crowd modeling \cite{LachapelleWolfram-2011-MFG-congestion-aversion,MR3763083,achdou2019mean}, energy management \cite{samarakoon2015energy,li2016mean,alasseur2020extended}, epidemiology \cite{doncel2020meansir,elie2020contact,aurell2020optimalincentives} or financial markets \cite{lachapelle2016efficiency,cardaliaguet2018mean,elie2020large}.  In traffic theory, similar approaches model cars evolution over the network as traffic flow.
Modeling microscopic cars on a link as a macroscopic traffic flow has been interpreted as a mean field game e.g. in \cite{chevalier2015micro}.  
Connections between an MFG model with myopic players and the Lighthill-Whitham-Richards (LWR) model \cite{lighthill1955kinematic} on a single road have been studied~\cite{huang2019gameautomousvehicles}.
A recent literature \cite{salhab2018meanroute,tanaka2020linearly} builds on the construction of MFGs on graphs \cite{gueant2015existence} and develops the application of MFGs to road traffic management.

\subsection{Related work}

Several works study mean field routing games. First, continuous time models have been studied. 
The existence and uniqueness of the Nash equilibrium of a MFG with congestion on a graph has been shown in \cite{gueant2015existence}, where the state space is the set of nodes. Models in which the state space is given by the edges have been analyzed e.g. in~\cite{achdou2020finitehorizonnetworkMFG}, which proved existence and uniqueness for a forward -backward system of equations with suitable conditions at the vertices of the network.   
In~\cite{bauso2016densitynetwork}, the authors analyzed an MFG model for traffic flow on networks by using an extended state space that includes the distribution of players on the network and they studied Wardrop equilibria.
In existing discrete time MFG models for routing, the players move one edge per time step and pay a cost that increases with the proportion of players on the same edge.
In~\cite{salhab2018meanroute}, the authors analyzed an MFG model and studied the impact of adding or removing edges on the equilibrium traffic flow. Their work provides a discrete time resolution of a mean field routing game with on an 11 link network with 6 time steps. 
In~\cite{tanaka2020linearly}, the authors proposed an MFG model that reduces to a linearly solvable Markov Decision Process and showed connections with Fictitious Play \cite{brown1951iterative} in some cases.  

The fact that existing models take congestion into account only through the cost functions  leads to paradox such as an ambiguity about the definition of travel time: the graph traversal time and the player cost can differ. Such issues make these models hardly applicable for traffic engineering.
Also, the main motivation for using an MFG-based routing method is to obtain an efficient equilibrium policy in the finite-player routing game, which has not been checked in existing works.

\subsection{Contributions and outline} 
The first main contribution of this work (see \cref{sec:dynamics}) is to  
introduce a novel dynamic mesoscopic traffic model viewed as a dynamic routing game with explicit congestion dynamics, i.e., congestion effects directly in the state evolution.
We first propose a finite-player game which can easily be interpreted in terms of vehicles, and then derive the corresponding MFG.
It is proved that Nash equilibria exist for both games. Furthermore, theoretical arguments supporting the MFG approximation are discussed. 

The second main contribution (see \cref{sec:experiment}) is to demonstrate numerically that the MFG provides an efficient way to approximately solve the finite-player routing game with very large population: the MFG is much less costly to solve and yet provides a very good approximate Nash equilibrium policy.  
This is illustrated on small networks for which baselines are available and on the Sioux Falls network -- a real network with 76 links and 100 time steps with 14,000 vehicles. Although this network is often used as a benchmark in the literature, to the best of our knowledge, it is the first time that a method is able to solve a dynamic routing game with a very large number of vehicles on this network.

\section{Dynamic N-player and mean field routing games}\label{sec:dynamics}

This section introduces the dynamic routing game and the corresponding MFG.
We show that the MFG approach allows to recover the dynamic routing game Nash equilibrium with a very large number of vehicles. Before moving to the mathematical details, the model we propose can be summarized in the following way. This dynamic routing game models the evolution of $\NumOfPlayer$ vehicles on a road network.
The vehicles are described by their current link location, the time they will spend on the link before exiting it, and their destination.
The action of a vehicle is the successor link they want to reach when exiting a given link.
Actions are encoded as integers from $0$ to $K$.
Pure actions for a player on link $\linkOne$, with a negative waiting time are the successors link of $\linkOne$.
When arriving on a link, the waiting time of the player is assigned based on the number of players on the link at this time.
As time goes by, the waiting time of a vehicle decreases until it becomes negative, then the vehicle moves to a successor link and the waiting time gets reassigned.
The total cost for the vehicle is its travel time. In the corresponding MFG, the vehicles of the $\NumOfPlayer$-player game are replaced by a representative vehicle and the probability distribution of the vehicles states.

\vskip 6pt

\noindent{\bf Notation.}  
We denote by $A^B$ the set of functions from a set $B$ to a set $A$. If $B$ is countable, $A^B$ interprets as a set of sequences indexed by elements of $B$. We use bold letters for functions of time and underlines for vectors whose coordinates correspond to players. The set of probability measures on a space $X$ is denoted $\ProbabilityMeasureSet{X}$. Unless otherwise specified, random variables are denoted by capital letters, functions of time are denoted by bold letters, and vectors indexed by the set of players are denoted by an underlined letter.

\subsection{Network and game set up}

Time is represented as an interval $\SetOfTime=[0, \EndTime]$ of $\R$.
The road network is described by a directed graph $\Graph = (\Vertices, \Links)$, where $\Vertices$ and $\Links$ respectively denote the sets of vertices and links of the road network.
When exiting a link $\linkOne\in\Links$, a vehicle chooses one of the possible successor links.
In case the link has no successor, the vehicle stays on the link until the end of time.
When joining a link, a vehicle get assigned a travel time on this link, that depends on the volume of traffic on the link.
More specifically, congestion induces a travel time spent on link $\linkOne\in\Links$ which is a function $\travelTime_{\linkOne} \in \R_{>0}^{[0,1]}$ of the proportion of vehicles on link $\linkOne$. We assume that $\travelTime_{\linkOne}$ is continuous.  
The congestion functions $(\travelTime_{\linkOne})_{\linkOne\in\Links}$ encode the heterogeneity of the roads' sensitivity to traffic volume within the network.
The following typical congestion function is based on an example provided by the 1964 traffic assignment manual of the U.S. Bureau of public road functions, see \cite[table 1.1]{Patriksson2015}: $\travelTime_{\linkOne}:\mu\mapsto t_0(1+\alpha (\mu/\mu_{\linkOne, c})^\beta)$, where $\alpha$ and $\beta$ are positive constants, $t_0$ is the free flow travel time (i.e., the travel time when the link is empty), and $\mu_{\linkOne, c}$ is the relative capacity of the link $\linkOne$ (which, in our context, is to be understood as a capacity in terms of proportion of players). 

Let us stress that the congestion functions are functions of the \emph{proportion} of vehicles within the link. However, in practice, congestion effects scale with the \emph{number} of vehicles (as well as other factors such as the width and the length of the road, which are considered fixed for a given network). One should thus interpret $\travelTime_\linkOne$ as being tuned for a given number of vehicles, say $N_0$. 
Concretely, in the previous example, if $n$ vehicles out of $N_0$ are on link $\ell$, we have $\travelTime_{\linkOne}(n/N_0) =  t_0(1+\alpha (n / (N_0\mu_{\linkOne, c}))^\beta)$. For $N_0' \neq N_0$, we have $\travelTime_{\linkOne}(n/N_0') =  t_0(1+\alpha (n/(N_0'\mu_{\linkOne, c}))^\beta)$. So the travel time can be viewed as a function of the number of vehicles $n$ on link $\linkOne$, provided the relative capacity is scaled by the total number of vehicles.

\subsection{N-player dynamic routing game}

Given the above network, this subsection defines a finite-player game.
Most of the notations are chosen to ease the presentation of the MFG in the next subsection.

\subsubsection{Traffic flow environment}

For the sake of convenience, we assume that the time horizon $T$ is large enough so that any driver will have time to travel through the network. 

Let $N$ be a positive integer.
The set of players is $\Vehicles = \{1, 2, \dots, \NumOfPlayer\}$.
The number of players in the game is not necessarily the total number of vehicles $N_0$ in the real-life scenario.
Each player of the model corresponds to a proportion of the real number of vehicles, which allows to define a player as an infinitesimal portion of flow that does not impact network travel time in the MFG.
In the case where $N=N_0$, a player is a vehicle.
Player $\vehicle \in \Vehicles$ starts at an origin link $\randomPlayerLocation^\vehicle_0 \in \Links$ with a departure time $\randomPlayerWaitingTime^\vehicle_0 \in \SetOfTime$, and has a destination link $\randomPlayerDestination^\vehicle_0 \in \Links$. 
This is the initial state of the player. 
Intuitively the player wants to start moving at time $\randomPlayerWaitingTime^\vehicle_0$ from $\randomPlayerLocation^\vehicle_0$ and tries to reach $\randomPlayerDestination^\vehicle_0$. We assume that the players' initial state are distributed according to a finite-support distribution $\StateDistribution_0$. 
Both the origin and the destination are modeled as links, so that the location of the vehicle is always described as a link.
In experimental setups, a origin link is added before each origin node and a destination link is added after each destination node.
Being on the origin link means having not departed yet, and being on the destination link means having finished their trip.

Then, at any time step $t$, the state of a player $\vehicle$ is not only the link $\randomPlayerLocation^\vehicle_t$ where they stand, but also their waiting time $\randomPlayerWaitingTime^\vehicle_t$ before exiting this link together with their destination $\randomPlayerDestination^\vehicle_t$.
$\randomPlayerLocation^\vehicle_t$ and $\randomPlayerWaitingTime^\vehicle_t$ are random variables due to the randomness in the action choices.  
Even though the destination is constant through time ($\randomPlayerDestination^\vehicle_t = \randomPlayerDestination^\vehicle_0$ for all $t$), including this information in the state allows to keep track of the objective in the player's policy.  
So the state space for each driver is $\playerStates = \Links\times\SetOfTime\times\Links$, where the first component is for the current location and the last one is for the destination (recall that the destination is represented by a link in our model). 
Then, the space of vehicle trajectories is $\playerTraj = \playerStates^{\SetOfTime}$.
The state trajectories are in the space of triples (location, waiting time, destination), which provide more information than the physical trajectories just in terms of locations. 
At the population level, the states of all the agents is a vector $\randomGameState = (\randomPlayerState^\vehicle)_{\vehicle\in\Vehicles}$. 
The state space for the whole population is $\gameStates = \playerStates^{\Vehicles}$, and the corresponding space of trajectories is $\gameTraj = \playerStates^{\SetOfTime\times\Vehicles}$. 
We respectively call game state and game trajectory the state  and trajectory of the population.  

\subsubsection{Routing policy}

When at link $\linkOne$, a player can try to move to another link among the successors of $\linkOne$, and the transition is realized provided the waiting time is $0$. 
The players are allowed to randomize their actions. 
We thus call strategy function and denote by $\strategy$ a function from $\playerStates$ to $\ProbabilityMeasureSet{\Links}$ such that for any $\playerState = (\linkOne, \waitingTime, \destination)$, $\strategy(\playerState)$ has support in the successors of $\linkOne$.
We denote by $\Strategies$ the set of such strategy functions.
A (feedback or closed-loop) policy $\policy$ is a function that associates to each time a strategy function, so it is an element of the set $\Policies = \Strategies^{\SetOfTime}$ of policies. 
The notation $\policy_t(\linkTwo | \linkOne, \waitingTime, \destination)$ represents the probability at time $t$ with which the agent would like to go from $\linkOne$ to $\linkTwo$  given the fact that their waiting time is $\waitingTime$ and their destination is $\destination$.
A policy profile $\policyProfile$ is a vector of policy functions with one policy for each player, i.e., it is an element of  $\policyProfiles = \Policies^{\Vehicles}$. 
Studying this class of policies can be justified by the fact that it allows each player to take a decision based only on their own state, which is realistic if the players do not know the situation of the rest of the population. 
More information could be added in the inputs of the policy (e.g., the proportion of agents on the current link), but this is beyond the scope of this work.

\subsubsection{State dynamics}
Since the players' initial states and actions are randomized, their trajectories are stochastic.
Given a policy profile $\policyProfile\in\policyProfiles$, $\randomGameState_t=(\randomGameLocation_t, \randomGameWaitingTime_t, \randomGameDestination_t) \in \gameStates$ denotes the random variable corresponding to the links, waiting times and destinations for all the players at time $t\in\SetOfTime$.
The stochastic process of the population state is denoted $\randomGameStateTraj = (\randomGameState_t)_{t \in \SetOfTime} \in \gameTraj$.
As indicated above, the players' interactions are through the travel time functions $(\travelTime_\linkOne)_{\linkOne \in \Links}$ taking into account congestion levels.  
So the interaction between a driver and the rest of the vehicles is only through the proportion of vehicles on the same link. 
It is thus convenient to introduce the empirical distribution $\linksDistribution^\NumOfPlayer_{\gameLocation} \in \ProbabilityMeasureSet{\Links}$ corresponding to a location profile $\gameLocation = (\linkOne^\vehicle)_{\vehicle \in \Vehicles} \in \Links^\NumOfPlayer$: for every $\linkOne \in \Links$, $\linksDistribution^\NumOfPlayer_{\gameLocation}(\playerLocation') = \frac{1}{N} \#\{\vehicle \,|\, \playerLocation^\vehicle = \playerLocation'\} \in [0,1]$,  
which is the proportion of players on the link $\playerLocation'$, given $\gameLocation$. This is all the information one needs from the game state to compute the interactions between players at link $\playerLocation'$. Note that $\linksDistribution^\NumOfPlayer_{\gameLocation}(\playerLocation')$ is invariant by permutation of the components of the vector $\gameLocation$.

Let us fix a policy profile $\policyProfile\in\policyProfiles$. 
We denote by $\randomGameRoutingTraj$ the $\Links^{\SetOfTime\times\playerStates\times\Vehicles}$-valued random variable assigned to the probability distribution given by the policy profile: for each $(t, \playerState, \vehicle) \in \SetOfTime\times\playerStates\times\Vehicles$,  $\randomPlayerRouting_t^\vehicle(\playerState)$ is an $\Links$-valued random variable with distribution $\strategy_{t}^\vehicle(\cdot|\playerState)$. 

The evolution of the state of the game  $\randomGameStateTraj_t=(\randomGameLocation_t, \randomGameWaitingTime_t, \randomGameDestination_t)$ is given by the following dynamics.
At initial time, $(\randomPlayerLocation^\vehicle_0, \randomPlayerWaitingTime^\vehicle_0, \randomPlayerDestination^\vehicle_0)$, $\vehicle \in \Vehicles$ are given, and then the dynamics is:
\begin{align*}
    t_{k+1} &= t_k + \min \{ \randomPlayerWaitingTime^{\vehicle}_{t_k},\ \vehicle\in\Vehicles\}
    \\
    \randomPlayerLocation^{\vehicle}_{t_{k+1}} &=
    \begin{cases}
        \randomPlayerRouting^{\vehicle}_{t_{k+1}}(\randomPlayerState^{\vehicle}_{t_{k}}) & \text{ if } \vehicle\in I_{t_{k+1}}
        \\ \randomPlayerLocation^{\vehicle}_{t_{k}} &\text{ otherwise;}
    \end{cases}
    \\
    \randomPlayerWaitingTime^{\vehicle}_{t_{k+1}} &=
    \begin{cases}
        \travelTime_{\randomPlayerLocation^{\vehicle}_{t_{k+1}}}\big(
        \linksDistribution^\NumOfPlayer_{\randomGameLocation_{t_{k+1}}}(\randomPlayerLocation_{t_{k+1}}^\vehicle)\big) & \text{ if } \vehicle\in I_{t_{k+1}}
        \\ \randomPlayerWaitingTime^{\vehicle}_{t_{k}}- (t_{k+1} - t_k)  &\text{ otherwise;}
    \end{cases}
    \\
    \randomPlayerLocation^{\vehicle}_{t} &= \randomPlayerLocation^{\vehicle}_{t_{k}} \qquad \forall k, \forall t\in[t_k,t_{k+1}[, \forall \vehicle\in\Vehicles
    \\
    \randomPlayerWaitingTime^{\vehicle}_{t} &=\randomPlayerWaitingTime^{\vehicle}_{t_{k}}- (t - t_{k}) \qquad \forall k, \forall t\in[t_k,t_{k+1}[, \forall \vehicle\in\Vehicles
    \\
    \randomPlayerDestination^\vehicle_t 
    &= \randomPlayerDestination^\vehicle_0, \qquad t \in \SetOfTime,
\end{align*}
where $I_{t_{k+1}} := \{\vehicle\in\Vehicles,\ \randomPlayerWaitingTime^{\vehicle}_{t_k}+t_k-t_{k+1}=0\}$ and using $(t_k)_{k\in\N}$ the sequence of times where one of the vehicles changes link with $t_0 = 0$ and $t+k=T$ if all the players have arrived their destination. 
The destination is constant through time and is not affected by the policy's randomness. 
Note that $\randomPlayerRoutingTraj^\vehicle = (\randomPlayerRouting^\vehicle_t)_{t \in \SetOfTime}$ is defined for all $t$ but used only when the player moves from one link to the next one, i.e., when the waiting time has vanished.
This enables reducing any pure (i.e. deterministic) policy as a path choice.

\subsubsection{Cost function}
\label{sec:Nplayer-cost-function}
Given a policy profile $\policyProfile\in\policyProfiles$, the cost for player $i$ is the average arrival time which can be defined as:  
$$
    \cost^\NumOfPlayer_\vehicle(\policy^{\vehicle}, \policy^{-\vehicle})
    =\expectation{\policyProfile}{\min\{t\in\SetOfTime,\ \randomPlayerLocation^\vehicle_t=\randomPlayerDestination^\vehicle\}}
    =\expectation{\policyProfile}{\int_{t \in \SetOfTime} \runCost(\randomPlayerState^\vehicle_t)dt}
$$
where $\policy^{-\vehicle} = (\policy^{1},\dots,\policy^{\vehicle-1}, \policy^{\vehicle+1}, \dots,\policy^{\NumOfPlayer})$, and the instantaneous cost is defined as: for every $\playerState = (\linkOne,\waitingTime,\destination)$, 
$
    \runCost(\playerState) = \indic_{\linkOne \neq \destination}.
$
Note that the running cost is independent of the (rest of the) population state, contrary to other models for routing or crowd motion in which the interactions are not in the dynamics but in the cost function.

Furthermore, the population is homogeneous (all players have the same dynamics evolution and same running cost), and player $\vehicle$ interacts with the other players only through $\linksDistributionTraj^\NumOfPlayer$ and for this reason, the cost function $\cost^\NumOfPlayer_\vehicle$ does not depend directly on the index $\vehicle$ but only on $\policy^\vehicle$: as a function, $\cost^\NumOfPlayer_\vehicle = \cost^\NumOfPlayer_{\vehicle'}$ for all $\vehicle'$. The policy profile $\policy^{-\vehicle}$ for the rest of the population is used only to compute $\linksDistributionTraj^\NumOfPlayer=(\linksDistribution^\NumOfPlayer_t)_{t \in \SetOfTime}$. Although $\policy^{-\vehicle}$ is necessary, it is not sufficient because $\linksDistributionTraj^\NumOfPlayer$ is also influenced by the policy $\policy^\vehicle$ chosen by the player under consideration. However, the influence of each player decays as $\NumOfPlayer$ increases, which will be the basis for the mean-field approach presented in \S~\ref{subsec:MFGapprox}.

\subsubsection{Nash equilibrium}
Considering that all the players are individually optimizing their own cost leads to the following notion of solution for the game. We refer to e.g.~\cite{myerson2013game} for more details. 
\begin{definition}[Nash equilibrium]
    A Nash equilibrium is a policy profile $\policyProfile^\star = (\policy^{\vehicle\star})_{\vehicle \in \Vehicles}\in\policyProfiles$ such that: 
    \[
        \forall \vehicle\in\Vehicles,\ \forall \policy\in\Policies,\ \cost^\NumOfPlayer_\vehicle(\policy^{\vehicle\star}, \policy^{-\vehicle\star}) \leq \cost^\NumOfPlayer_\vehicle(\policy, \policy^{-\vehicle\star}).
    \]
\end{definition}
The following result says that, in our model, such equilibria exist. 
\begin{theorem}[Existence of $N$-player Nash equilibrium, Kakutani-Fan-Glisckberg theorem \cite{glicksberg1952further}]\label{thm:existence-N-player-eq}
Assuming the continuity of the cost function with respect to the policy profiles, there exists a Nash equilibrium in the $\NumOfPlayer$-player routing game.
\end{theorem}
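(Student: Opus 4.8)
The plan is to apply the Kakutani–Fan–Glicksberg fixed-point theorem to the best-response correspondence of the $\NumOfPlayer$-player game, exactly along the lines of the classical existence proof for Nash equilibria in games with continuous payoffs and convex compact strategy sets. First I would set up the strategy space as a compact convex subset of a topological vector space. For each player, the policy $\policy \in \Policies = \Strategies^{\SetOfTime}$ is a map from time to strategy functions; since $\travelTime_\linkOne$ is continuous and the running cost depends only on the location being the destination, what actually matters for the dynamics are the finitely many transition probabilities that are used, namely $\policy_t(\linkTwo \mid \linkOne, 0, \destination)$ at the (finitely many, since $\StateDistribution_0$ has finite support and the graph is finite) states and transition times that can be reached. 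So I would argue that, without loss of generality, the relevant part of each player's strategy lives in a finite product of simplices $\prod (\ProbabilityMeasureSet{\mathrm{succ}(\linkOne)})$ — one simplex per (reachable link, destination, transition-time) triple — which is a nonempty, compact, convex subset of a finite-dimensional Euclidean space. The game strategy space is then the product $\policyProfiles$ over the $\NumOfPlayer$ players, again nonempty, compact, and convex.

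Next I would verify the hypotheses of Kakutani–Fan–Glicksberg for the best-response correspondence $\mathrm{BR}(\policyProfile) = \prod_\vehicle \argmin_{\policy} \cost^\NumOfPlayer_\vehicle(\policy, \policy^{-\vehicle})$. The theorem's hypothesis — continuity of $\cost^\NumOfPlayer_\vehicle$ in the full policy profile $\policyProfile$ — is assumed in the statement, so I would take this as given; it follows in any case because the law of $\randomGameStateTraj$ depends continuously (indeed polynomially) on the finitely many transition probabilities, the empirical distribution $\linksDistribution^\NumOfPlayer$ is a continuous function of the location profile, $\travelTime_\linkOne$ is continuous, and the arrival time is a bounded functional (using that $T$ is large enough for every trip to complete). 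Given continuity and compactness, each best-response set is nonempty (a continuous function on a compact set attains its minimum), and it is convex because $\cost^\NumOfPlayer_\vehicle(\policy^\vehicle, \policy^{-\vehicle})$ is \emph{affine} in player $\vehicle$'s own randomization at each decision point — mixing over successor links enters linearly in the transition kernel and the cost is an expectation, so the argmin set is the intersection of an affine subspace with the simplex, hence convex. Closedness of the graph of $\mathrm{BR}$ is the standard Berge-type argument: if $\policyProfile^n \to \policyProfile$, $\hat\policyProfile^n \in \mathrm{BR}(\policyProfile^n)$, $\hat\policyProfile^n \to \hat\policyProfile$, then passing to the limit in $\cost^\NumOfPlayer_\vehicle(\hat\policy^{\vehicle n}, \policy^{-\vehicle n}) \le \cost^\NumOfPlayer_\vehicle(\policy, \policy^{-\vehicle n})$ for every $\policy$, using joint continuity, gives $\hat\policyProfile \in \mathrm{BR}(\policyProfile)$.

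With these four properties — nonempty convex compact domain, and a correspondence with nonempty convex values and closed graph — Kakutani–Fan–Glicksberg yields a fixed point $\policyProfile^\star \in \mathrm{BR}(\policyProfile^\star)$, which is by definition a Nash equilibrium in the sense of the Definition above, and closing the argument.

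I expect the main obstacle to be not any single step but the bookkeeping needed to honestly reduce the infinite-dimensional policy space $\Policies = \Strategies^{\SetOfTime}$ to a finite-dimensional compact convex set on which the fixed-point theorem literally applies, and to confirm that the best-response set depends only on this finite-dimensional "trace." Concretely, one must check that (i) the set $\{t_k\}$ of possible transition times and the set of reachable $(\linkOne,\destination)$ pairs are finite, uniformly over policy profiles — which uses the finite graph, finite support of $\StateDistribution_0$, and the fact that waiting times $\travelTime_\linkOne(\cdot)$ are bounded below by a positive constant (continuity on the compact $[0,1]$), so only finitely many link changes occur before $T$; and (ii) two policies agreeing on this finite trace induce the same cost for every player, so that the argmin is well-defined on the quotient. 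Once this reduction is in place, the affineness of the cost in a player's own mixing and the closed-graph property are routine. A secondary subtlety worth a remark is that the running cost $\runCost(\playerState) = \indic_{\linkOne \neq \destination}$ is discontinuous as a function of state, but the \emph{expected} cost $\cost^\NumOfPlayer_\vehicle$ is still continuous in $\policyProfile$ because the event $\{\randomPlayerLocation^\vehicle_t = \randomPlayerDestination^\vehicle\}$ has probability varying continuously with the (finitely many) transition probabilities — this is exactly the continuity hypothesis the theorem takes as input.
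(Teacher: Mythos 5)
Your proposal is correct and follows essentially the same route as the paper's appendix proof: reduce the policy space to a finite-dimensional, nonempty, compact, convex set by showing there are only finitely many possible transition times and decision points, then apply Kakutani--Fan--Glicksberg to the best-response correspondence, with nonemptiness/convexity of the argmin coming from linearity of the cost in a player's own randomization and the closed graph from the assumed continuity. The only cosmetic difference is that the paper represents restricted policies as mixtures over finitely many pure path-policies, whereas you use a product of simplices at each reachable decision point (a behavioral-strategy parametrization); both yield the same convexity and linearity facts.
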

\begin{proof}
The proportion of players on each link is always a multiple of $1/\NumOfPlayer$. Since the number of links and the time horizon are finite, there is a finite set of times at which a vehicle can switch link. The set of policy profiles can thus be restricted to a finite set.
Therefore, the game can be restated as a game with finite state and action spaces. Assuming the continuity of the cost function with respect to the policy, it has a Nash equilibrium.
Further development of the proof are provided in the appendix.
\end{proof}

Besides the above definition, another way to express that a policy profile $\policyProfile$ is a Nash equilibrium is to say that the deviation incentive is $0$ for every player, where the deviation incentive for player $i$ is: 
$$
    \deviationIncentive^\NumOfPlayer_\vehicle(\policy^\vehicle,\policy^{-\vehicle}) = \cost^\NumOfPlayer_\vehicle(\policy^\vehicle,\policy^{-\vehicle})-\argmin_{\policy' \in \Policies}\cost^\NumOfPlayer_\vehicle(\policy', \policy^{-\vehicle}).
$$ 
This also serves as a basis to assess the convergence of algorithms towards a Nash equilibrium using the average deviation incentive:
\begin{equation}
    \label{eq:avg-dev-incentive}
    \averageDeviationIncentive^\NumOfPlayer(\policyProfile) = \frac{1}{\NumOfPlayer} \sum_{\vehicle=1}^{\NumOfPlayer} \deviationIncentive^\NumOfPlayer_\vehicle (\policy^\vehicle,\policy^{-\vehicle}).
\end{equation}

\subsection{Mean Field approximation}
\label{subsec:MFGapprox}

As mentioned in the introduction, solving the above $\NumOfPlayer$-player game is infeasible as $\NumOfPlayer$ is very large. We thus turn to an MFG version of the above routing game, which can be used to provide approximate Nash equilibria and whose quality improves as $\NumOfPlayer \to +\infty$.
This is based on considering the interactions between a typical player and a distribution representing the rest of the population.
This is possible thanks to the anonymity and the symmetry in the interactions, which allows us to focus on symmetric Nash equilibria. Intuitively, the law of large numbers allows to consider the state distribution instead of a large number of random variables induced by it.

\subsubsection{Traffic flow environment}

The state of a typical player at time $t$ is a random variable denoted by $\randomPlayerState_t = (\randomPlayerLocation_t, \randomPlayerWaitingTime_t, \randomPlayerDestination_t)$ which takes  values in $\playerStates = \Links\times\SetOfTime\times\Links$. At time $0$, the population's state distribution is $\StateDistribution_0$ and is known to the players. 

\subsubsection{Routing policy}
The space of policies is still  $\Policies$. For a policy $\policy$, we denote by $\policy_{t}(\linkTwo | \linkOne, \waitingTime, \destination)$ the probability with which a typical player using policy $\policy$ would like to go from $\linkOne$ to $\linkTwo$ given that their waiting time is $\waitingTime$ and their destination is $\destination$. The routing random variable is denoted by $\randomPlayerRouting$.

\subsubsection{State dynamics}
Assume that an infinitesimal agent uses policy $\policy$ while the rest of the population uses $\policy'$. Let $\linksDistributionTraj = (\linksDistribution_t)_{t \in \SetOfTime} \in \ProbabilityMeasureSet{\Links}^{\SetOfTime}$ be the flow of distributions on $\Links$ induced by the population that uses $\policy'$. The evolution of a typical player's state is given by the following dynamics. Let $t_0 = 0$ and let $(\randomPlayerLocation_{0}, \randomPlayerWaitingTime_{0}, \randomPlayerDestination_{0})$ be a given initial state. Then, the dynamics follow:
\begin{align*}
    t_{k+1} &= \randomPlayerWaitingTime_{t_k}+t_k
    \\
    \randomPlayerLocation_{t_{k+1}} &= \randomPlayerRouting_{t_{k}}(\randomPlayerState_{t_{k}}) 
         \\
    \randomPlayerWaitingTime_{t_{k+1}} &=
        \travelTime_{\randomPlayerLocation_{t_{k+1}}}\left(
        \linksDistribution_{t_{k+1}}(\randomPlayerLocation_{t_{k+1}})\right)
            \\ 
    \randomPlayerLocation_{t} &= \randomPlayerLocation_{t_{k}} \qquad \forall k, \forall t\in[t_k,t_{k+1}[
    \\
    \randomPlayerWaitingTime_{t} &=\randomPlayerWaitingTime_{t_{k}}- (t_{k} - t) \qquad \forall k, \forall t\in[t_k,t_{k+1}[
    \\
    \randomPlayerDestination_{t} &= \randomPlayerDestination_0, \qquad t \in \SetOfTime.
\end{align*}
Here $(t_k)_{k\in\N}$ denotes the sequence of times where the representative player changes link (we take $t_k = T$ when there are no more changes), 
and $\linksDistribution_{t}(\playerLocation)\in[0,1]$ is the proportion of the mean field population on link $\playerLocation$ at time $t$. 

\subsubsection{Cost function}
The cost of the typical player using policy $\policy$ when the population uses policy $\policy'$ is defined as:
$$
    \cost(\policy,\policy') 
    =
    \expectation{\policy,\policy'}{\int_{t \in \SetOfTime} \runCost(\randomPlayerState_t)dt}
$$
where the state of the representative player $\randomPlayerStateTraj = (\randomPlayerState_t)_{t \in \SetOfTime}$ has the above dynamics with policy $\policy$, and the instantaneous cost function $\runCost$ is the same function as in the finite player game (see \S~\ref{sec:Nplayer-cost-function}).
Analogously to the $\NumOfPlayer$-player game, the  policy $\policy'$ is used only to deduce $\linksDistributionTraj = (\linksDistribution_t)_{t \in \SetOfTime}$ that appears in the evolution of $\randomPlayerWaitingTimeTraj$. 
So the cost function $\cost$ could alternatively be written as a function of $(\policy,\linksDistributionTraj)$ instead of $(\policy,\policy')$. 
In contrast with the finite player regime, we highlight that here $\policy'$ completely determines $\linksDistributionTraj$ because the player under consideration is infinitesimal and hence their policy $\policy$ does not affect the flow $\linksDistributionTraj$ of distributions of locations of the population.

\subsubsection{Nash equilibrium}

The counterpart of the $\NumOfPlayer$-player Nash equilibrium in the mean-field regime can now be introduced. 
\begin{definition}[Mean field Nash equilibrium (definition 3.1. of \cite{saldi2018markov})]
A mean field Nash equilibrium (MFNE) is a policy $\policy^\star\in\Policies$ such that: $\cost(\policy^\star,\policy^\star) \leq \cost(\policy',\policy^\star)$ for all $\policy'$, or equivalently:
$$ 
    \policy^\star \in \argmin_{\policy\in\Policies} \cost(\policy,\policy^\star).
$$ 
\end{definition}

Another way to express that $\policy\in\Policies$ is a MFNE is to say that the average deviation incentive $\averageDeviationIncentive$ vanishes, where:
$$
    \averageDeviationIncentive(\policy) =
    \cost(\policy,\policy) - \argmin_{\policy'\in\Policies} \cost(\policy',\policy).
$$

\begin{theorem}[Existence of mean field Nash equilibrium, Kakutani-Fan-Glisckberg theorem \cite{glicksberg1952further}]
\label{thm:existence-MFNE}
Assuming the continuity of the cost function with respect to the policy profiles, and assuming that the support of the initial distribution of the waiting time is a finite set, there exists a mean field Nash equilibrium. 
\end{theorem}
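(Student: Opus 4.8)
The plan is to exhibit a mean field Nash equilibrium as a fixed point of the best-response correspondence and to invoke the Kakutani--Fan--Glicksberg theorem, mirroring the reduction used for \cref{thm:existence-N-player-eq}. Define the set-valued map $\Phi : \Policies \to \PowerSet{\Policies}$ by $\Phi(\policy') = \argmin_{\policy\in\Policies}\cost(\policy,\policy')$; then $\policy^\star$ is an MFNE precisely when $\policy^\star \in \Phi(\policy^\star)$ (equivalently $\averageDeviationIncentive(\policy^\star)=0$). So it suffices to verify: (i) the domain is a nonempty, convex, compact subset of a locally convex Hausdorff topological vector space; (ii) $\Phi$ is nonempty-valued and convex-valued; (iii) $\Phi$ has closed graph.

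For (i), I would first use the assumption that the initial distribution has finite support --- so the initial waiting time $\randomPlayerWaitingTime_0$ takes finitely many values --- together with the finiteness of $\Links$ and of $\SetOfTime=[0,\EndTime]$ and the continuity of the $(\travelTime_\linkOne)_{\linkOne\in\Links}$, to restrict attention to a set $\RestrictedPolicies$ of policies carrying only the values of $\policy_t(\cdot\,|\,\linkOne,\waitingTime,\destination)$ at the reachable decision configurations; each such value ranges over the simplex of probability measures supported on the successors of $\linkOne$, so $\RestrictedPolicies$ is a product of simplices --- convex, and compact for the product topology by Tychonoff --- sitting inside a product of Euclidean spaces. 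For (iii), the standing hypothesis that $\cost$ is continuous in the policy profile (which in turn hinges on the continuity of the congestion functions and on the finiteness of the initial support, both needed to control how the flow $\linksDistributionTraj$ induced by $\policy'$ and the arrival-time cost vary with the policies) lets Berge's maximum theorem give upper hemicontinuity of $\Phi$ with nonempty compact values, hence a closed graph on the compact domain; this also yields the nonempty part of (ii).

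The delicate point --- and the main obstacle --- is convex-valuedness of $\Phi$: convex combinations of policies that are each optimal against $\policy'$ need not be optimal, since the value of a stochastic control problem is not convex in the policy. I would get around this by reformulating the representative player's problem, for a \emph{fixed} opponent flow $\linksDistributionTraj$, in terms of occupation measures: the achievable occupation measures form a compact convex set, $\cost(\cdot,\policy')$ is affine on it, so the optimal occupation measures form a compact convex face, and the induced location-flow is an affine image of the occupation measure; one can therefore run the fixed-point argument at the level of the flow $\linksDistributionTraj$ and recover a minimizing policy $\policy^\star$ by a measurable selection. Carrying out this reconciliation between the policy picture and the occupation-measure/flow picture --- while also checking the continuity statements in the presence of state-dependent, continuum-valued transition times --- is where the real work lies; granting it, Kakutani--Fan--Glicksberg produces $\policy^\star\in\Phi(\policy^\star)$, i.e., a mean field Nash equilibrium, with the remaining details deferred to the appendix as in \cref{thm:existence-N-player-eq}.
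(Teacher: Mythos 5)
Your proposal is correct in its overall architecture and matches the paper's skeleton: restrict to a compact convex strategy set, form the best-response correspondence, and apply Kakutani--Fan--Glicksberg, with the finite-support assumption on $\randomPlayerWaitingTime_0$ and the continuity of $\cost$ doing exactly the jobs you assign them (finiteness of the reachable decision structure, nonemptiness and closed graph). Where you genuinely diverge is on the one point you rightly flag as delicate, convex-valuedness of the argmin. The paper's resolution is more elementary than yours: it observes that a pure policy of the representative player reduces to the choice of a path given a departure time, so the relevant strategy set is the finite-dimensional simplex $\ProbabilityMeasureSet{\SetOfPaths^{\mathrm{supp}(\randomPlayerWaitingTime_0)}}$ of mixtures over finitely many such pure choices; on that simplex the cost is affine in the mixing weights (\cref{cor:combination}), so the best-response set is a convex face immediately and no occupation-measure machinery or measurable selection is needed. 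Your occupation-measure route is the same convexification in heavier clothing --- a mixture over paths \emph{is} the occupation measure of the representative player's trajectory, whose dynamics are deterministic given the actions --- and it buys generality the finite setting does not require, at the price of leaving the reconciliation between the behavioral-policy picture and the occupation-measure picture as announced but unexecuted work. In fairness, the paper's own proof also leaves its last step (translating the equilibrium path-mixture back into a feedback policy in $\Policies$ and checking it remains a best response there) at the level of a sketch, so the two arguments end at a comparable degree of completeness; but adopting the finite path-mixture parametrization would let you delete the measurable-selection step entirely.
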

\begin{proof}
The set of pure policies for the representative player can be restricted to the choice of a path given a departure time.
This set is finite as long as the support of the initial distribution of waiting time is.
Therefore the argmin map (also called Best response map) is a Kakutani-Fan-Glisckberg map providing the continuity of the cost with respect to the policy of the representative player and of the mean field.
The proof is further developed in appendix.
\end{proof}

The continuity of the cost function with respect to the policies plays a crucial role. A counter-example of the existence of a Nash equilibrium with a discontinuous cost is shown in the appendix. 

One of the advantages of considering a mean field setting, is that any MFNE is automatically a dynamic Wardrop equilibrium. 
\begin{theorem}[Dynamic Wardrop equilibrium \cite{Wardrop1952}]\label{thm:equatlization-travel-time}
For any mean field Nash equilibrium, all induced trajectories of players with the same initial state (origin, waiting time, destination), have the same travel time (i.e., the same total cost). 
\end{theorem}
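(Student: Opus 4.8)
The plan is to argue by contradiction using the fundamental characterization of a Nash equilibrium: if a player's policy is a best response, then every trajectory it puts positive probability on must achieve the minimal cost. First I would fix a mean field Nash equilibrium $\policy^\star$ and let $\linksDistributionTraj$ be the induced flow of location distributions, so that the cost functional $\cost(\cdot, \policy^\star)$ can be viewed as a function of the representative player's policy against the fixed environment $\linksDistributionTraj$. The key structural observation, already noted in the description of the dynamics, is that against a fixed $\linksDistributionTraj$ a pure policy reduces to the choice of a path from the origin, and the cost of a pure policy starting from a given initial state $(\linkOne_0, \waitingTime_0, \destination_0)$ is simply the arrival time of the corresponding deterministic trajectory through the network with waiting times dictated by $\linksDistributionTraj$. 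A general (randomized) policy then yields, for each fixed initial state, a probability distribution over such pure trajectories, and its cost conditional on that initial state is the average of the pure-trajectory arrival times.

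Next I would decompose the total cost by conditioning on the initial state: since $\StateDistribution_0$ has finite support, $\cost(\policy, \policy^\star) = \sum_{(\linkOne_0,\waitingTime_0,\destination_0)} \StateDistribution_0(\linkOne_0,\waitingTime_0,\destination_0)\, \cost_{(\linkOne_0,\waitingTime_0,\destination_0)}(\policy, \linksDistributionTraj)$, where each conditional cost depends only on the behaviour of $\policy$ on trajectories emanating from that initial state, and these blocks are decoupled. Hence $\policy^\star$ minimizes $\cost(\cdot,\policy^\star)$ if and only if, for every initial state in the support of $\StateDistribution_0$, it minimizes the corresponding conditional cost. Fix such an initial state $\playerState_0$ and let $c^\star(\playerState_0)$ be the minimal conditional cost, attained by some pure path. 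Since the conditional cost of $\policy^\star$ is an average over the pure trajectories it generates from $\playerState_0$, and each such trajectory has arrival time at least $c^\star(\playerState_0)$, the average can equal $c^\star(\playerState_0)$ only if every trajectory generated from $\playerState_0$ with positive probability has arrival time exactly $c^\star(\playerState_0)$. This is precisely the claim: all induced trajectories sharing the initial state $(\origin, \waitingTime, \destination)$ have the same total cost.

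The main obstacle I anticipate is making rigorous the reduction "policy against a fixed $\linksDistributionTraj$ = distribution over finitely many pure paths, with cost = average arrival time," which requires checking that (i) the waiting-time reassignment $\travelTime_{\linkOne}(\linksDistribution_t(\linkOne))$ is a deterministic function of the arrival link and time once $\linksDistributionTraj$ is frozen, so the only randomness in the representative player's trajectory comes from the routing variables $\randomPlayerRoutingTraj$, and (ii) the arrival time is a well-defined, finite quantity — here one invokes the standing assumption that $T$ is large enough for every driver to reach their destination, together with finiteness of $\Links$, $\SetOfTime$'s relevant switching times, and the finite support of the initial waiting-time distribution, so that there are only finitely many candidate pure trajectories. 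A secondary subtlety is that $\argmin$ is used in the paper's notation where one means the minimal \emph{value}; I would state the argument in terms of $\min_{\policy'} \cost(\policy',\policy^\star)$ to avoid ambiguity. Once these points are in place, the contradiction argument is immediate and no further estimates are needed.
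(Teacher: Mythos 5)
Your proposal is correct and follows essentially the same route as the paper: reduce pure policies against the frozen equilibrium flow $\linksDistributionTraj$ to path choices, use linearity of the cost in the mixing weights (the paper's Lemma on convex combinations of pure-strategy costs), and conclude that every pure trajectory in the support of the equilibrium must attain the minimal cost. The only difference is that you explicitly decompose the cost by initial state before applying the support argument — a detail the paper's proof glosses over but which is needed to get the statement ``same travel time for the same initial state'' rather than merely equality of the aggregate costs of the pure policies in the support.
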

\begin{proof}
In case a trajectory used by the representative player has a higher travel time than another one, then the player has an incentive to deviate, and the game is not a Nash equilibrium.
The proof is further developed in the appendix.
\end{proof}

Any mean field Nash equilibrium policy $\policy^\star$ can be used by the players in an $\NumOfPlayer$-player game. Intuitively, the larger $\NumOfPlayer$ is, the closer the population is to the mean field regime. In fact, it can be shown under suitable conditions that $\policyProfile^\star = (\policy^\star,\dots,\policy^\star) \in \Policies^{\NumOfPlayer}$ is an approximate Nash equilibrium whose quality improves with $\NumOfPlayer$ in the sense that: 
$$
    \averageDeviationIncentive^\NumOfPlayer(\policyProfile^\star) \to 0, \qquad \hbox{as $\NumOfPlayer \to +\infty$.}
$$
So if all the agents use the mean field Nash equilibrium policy, then any single player's incentive to deviate decreases when the population becomes larger.  
For example, \cite{saldi2018markov} prove in their setting that: if $\policy^\star$ is an MFNE, then for every $\epsilon>0$, there exists $\NumOfPlayer_0 \in \mathbb{N}$ such that for every $\NumOfPlayer \ge \NumOfPlayer_0$, the $\NumOfPlayer$-player policy profile $(\policy^\star,\dots,\policy^\star) \in \Policies^{\NumOfPlayer}$ satisfies: $\averageDeviationIncentive^\NumOfPlayer(\policyProfile) \le \epsilon$.

Next, to illustrate this property in our model, an explicit computation is carried out in the simple Pigou network and then is empirically verified on both Pigou and Braess networks. 

\subsubsection{Mean field equilibrium policy in the $\NumOfPlayer$ player Pigou game}\label{subsec:pigou}

For the sake of illustration, we present a toy example for which the solution can be computed analytically.

The graph has 2 nodes and 2 parallel links, say $\linkOne, \linkTwo$, relating these 2 nodes. The cost function is: $\travelTime_{\linkOne}(x) = 0.5 T$, $\travelTime_{\linkTwo}(x) = x T$ for all $x \in [0.1]$. 
The departure time (initial waiting time) is the same for all the agents. 
The mean field Nash equilibrium can be computed and yields an equilibrium distribution with proportions $\linksDistribution_t(\linkOne) = \linksDistribution_t(\linkTwo) = 0.5$.
On the other hand, the Nash equilibrium for the $\NumOfPlayer$ player game is such that $\linksDistribution_t(\linkOne)$ the current proportion of players on $\linkTwo$ is included in $[\frac{1}{2}-\frac{1}{\NumOfPlayer},\frac{1}{2}]$. 
As detailed in the appendix, we can check that the average deviation incentive of the mean-field equilibrium policy in the $\NumOfPlayer$-player game is \[
    \frac{T}{\NumOfPlayer 2^{\NumOfPlayer}}\sum_{m=1}^\NumOfPlayer{\NumOfPlayer-1\choose m} \max\left\{\frac{\NumOfPlayer}{2}-m-1, m+1-\frac{\NumOfPlayer}{2}\right\},
\] which goes to $0$ when $\NumOfPlayer\to\infty$.

\section{Experiments} \label{sec:experiment}

This section shows experimentally that (1) computing the mean field equilibrium  
is easier than computing the $\NumOfPlayer$-player Nash equilibrium using state of the art algorithms (sampled counterfactual regret minimization \cite{zinkevich2007regret}) and (2) it gives an excellent approximation of the $\NumOfPlayer$-player equilibrium when $\NumOfPlayer$ is large (above 30 in the case of the Pigou \cite{pigou1920} and the Braess \cite{braess2005paradox} network).
The experiments also show that (3) online mirror descent algorithm \cite{Perolat2021scaling} enables computing the mean field equilibrium on the Sioux Falls network, a classic use case in road traffic network games, with 14,000 vehicles (across two origin-destination pairs) and realistic congestion function (from the open source dataset \cite{networkGithub}).

\subsection{Context}

All the experiments are conducted within the OpenSpiel framework \cite{lanctot2019openspiel}, an open source library that contains a collection of environments and algorithms to apply reinforcement learning and other optimization algorithms in games.
The code is publicly available on GitHub\footnote{\url{https://github.com/deepmind/open_spiel}}.

\textbf{Goal of the experiments.}
The experiments aim to show that the mean field equilibrium policy is faster to compute than the $\NumOfPlayer$-player policy and approximates well an equilibrium policy in the corresponding $\NumOfPlayer$-player game, showing that the mean field approach solves the curse of dimensionality regarding the number of players in $\NumOfPlayer$-player games.
Intuitively, the MFG approach is relevant when the number of possible states for any player is lower than the number of players.
In that case, computing the population's distribution probabilities over the possible states is faster than simulating each player trajectory.
The approximation is correct when representing the probability distribution over the state space is similar to representing the sum of each individual player random variable state, which is the case with large number of players thanks to the central limit theorem \cite{bertsekas2002introduction}.
In the MFG, heterogeneity between the players is encoded in the state, to use the same policy for each player without a loss a generality.
As an example, in our model, the destination of the player is represented in the state.

\textbf{Metrics.}
The quality of the approximation of the Nash equilibrium policy completed by the candidate policy is measured using the average deviation incentive defined in ~\eqref{eq:avg-dev-incentive} (also known as the average marginal regret \cite{Cabannes2019regret}, or the relative gap to the dynamic user equilibrium \cite{chiu2011dynamic} in traffic engineering).

\textbf{Implementation.} The $\NumOfPlayer$-player game is encoded as a simultaneous, perfect information, general sum  
game.
The corresponding MFG is encoded as a mean field, perfect information, general sum  
game.
OpenSpiel provides many algorithms to find Nash equilibria of simultaneous games or MFGs.
These algorithms include model-free algorithms such as Neural Fictitious Self-Play \cite{heinrich2016deep} and model-based algorithms such as Counterfactual Regret Minimization (and some variants) \cite{zinkevich2007regret} which we use to solve the $\NumOfPlayer$-player game.
The experiments solve the MFG using the online mirror descent algorithm \cite{Perolat2021scaling}.
The experiments performed in OpenSpiel use a fixed time discretisation.

\textbf{Networks.} As classical network games consider demand between nodes, we add artificial origin and destination links before and after each node in the network (Pigou \cite{pigou1920}, Braess \cite{braess2005paradox} and Sioux Falls).
This enables defining vehicle location only using links, and defining state of not having begun a trip and having finished it.

The \emph{Pigou network} \cite{pigou1920} has two links $\linkOne$, $\linkTwo$ and two nodes (an origin and a destination one) which come from the conversion of the origin and the destination nodes.
A time discretisation of 0.01, with a time horizon of 2 is used.
The cost functions are $\travelTime_{\linkOne}(x) = 2$, $\travelTime_{\linkTwo}(x) = 1 + 2x$ and all the demand leaves the origin link at time $0$ and head towards the destination link.

The \emph{Braess network} game is the dynamic extension of the game described in \cite{braess2005paradox}.
The network has 5 links $AB$, $AC$, $BC$, $BD$ and $CD$, one origin node $A$ converted to an origin link $OA$ and a destination node $D$ converted to a destination link $DE$.
The cost functions are $\travelTime_{AB}(x) = 1+x$, $\travelTime_{AC}(x) = 2$, $\travelTime_{AB}(x) = 0.25$, $\travelTime_{BD}(x) = 2$, $\travelTime_{CD}(x) = 1+x$.
All the demand leaves the origin link at time $0$ and head towards the destination link.
We use a time step of 0.05 and a time horizon of 5.

In the \emph{augmented Braess network} game, a destination link $CF$ is added to the network and 50 more vehicles leave the origin to $DE$ at time 0, 0.5 and 1, while 50 others leaves the origin to $CF$ at times 0 and 1, totaling 250 vehicles with 2 different destinations and 3 different departure times.

The \emph{Sioux Falls network} game is used by the traffic community for proof of concepts on network with around 100 links.
The network (76 links without the origin and destination links), the link congestion functions, and an origin-destination traffic demand are  open source \cite{networkGithub}.
As the classical routing game \cite[Chapter 18]{Nisam2007} is a static game, the demand is only a list of tuple origin, destination and counts, and does not provide any departure time.
We use the network data (including the congestion functions) and generate a demand specific to the game.
We model 7,000 vehicles departing at time 0 from node 1 to node 19, and 7,000 vehicles departing at time 0 from node 19 to node 1.
We use a time step of 0.5 and a time horizon of 50.

\subsection{Mean field game solves the curse of dimensionality in the number of players}

In this section, the mean field equilibrium policy is computed for both the Braess and the Pigou network games.
In addition to being considerably faster to compute compared to the $\NumOfPlayer$-player Nash equilibrium, the mean field equilibrium provides an excellent approximation when $\NumOfPlayer$ is above 30.

The evolution of the Braess mean field Nash equilibrium policy is given on \cref{fig:braess_dynamics}. 
The travel time on the three possible paths are equals, which encodes the Nash equilibrium condition of the MFG provided that the travel time on each link is a multiple of the time step, accordingly to \cref{thm:equatlization-travel-time}.

\begin{figure}
    \centering
    \subfloat[]{\includegraphics[width=0.3\linewidth]{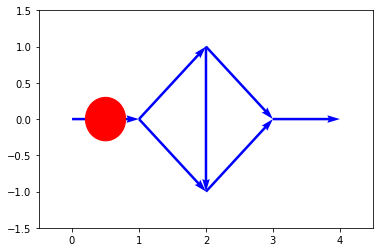}\label{subfig:t_0}}
    \subfloat[]{\includegraphics[width=0.3\linewidth]{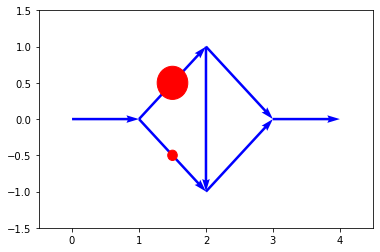}\label{subfig:t_025_175}}
    \subfloat[]{\includegraphics[width=0.3\linewidth]{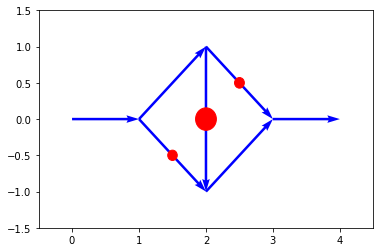}\label{subfig:t_200}}
    
    \subfloat[]{\includegraphics[width=0.3\linewidth]{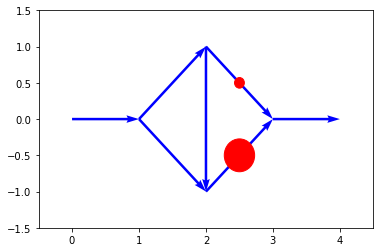}\label{subfig:t_225_375}}
    \subfloat[]{\includegraphics[width=0.3\linewidth]{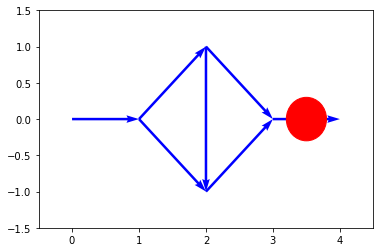}\label{subfig:t_400}}
    \caption{The dynamic of the Braess network in the mean field Nash equilibrium; the locations of the cars at time 0.0 (Figure \ref{subfig:t_0}), from time 0.25 to 1.75 (Figure \ref{subfig:t_025_175}), at time 2.0 (Figure \ref{subfig:t_200}), from time 2.25 to 3.75 (Figure \ref{subfig:t_225_375}), at time 4.0 (Figure \ref{subfig:t_400}). The travel time on each path are equal to 3.75, travel time equalization defines the mean field Nash equilibrium.}
    \label{fig:braess_dynamics}
\end{figure}

\subsubsection{While solving $\NumOfPlayer$-player game is intractable for large number of players, this can be done for the mean field game.}

We compare the running time of the algorithms for solving the $\NumOfPlayer$-player game and the mean field player game depending on the number of players it models.
The counterfactual regret minimization with external sampling (ext CFR) is used in the $\NumOfPlayer$-player game, as it is the fastest algorithms to solve the dynamic routing $\NumOfPlayer$-player game within the OpenSpiel library of algorithms (comparison done within the OpenSpiel framework are not reported here).
Online mirror descent (OMD) is used in the MFG.
Comparison between the running time of 10 iterations of ext CFR and OMD are done as a function of the number of vehicles modeled in \cref{fig:computation_time_algo_n_vs_mean_field}.
As the mean field Nash equilibrium does not depends on the number of vehicles the MFG models, the computation time of 10 iterations of OMD is independent of the number of vehicles modeled.
On the other hand, the computational cost of 10 iterations of ext CFR increases exponentially with the number of players, making the computation of a Nash equilibrium with a large number of players intractable with the algorithms of the OpenSpiel library.

\begin{figure}
    \centering
    \includegraphics[width=0.6\linewidth]{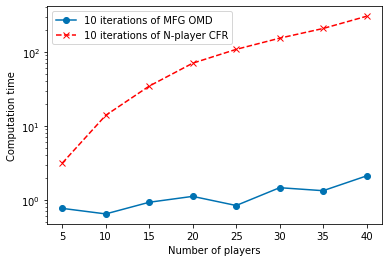}
    \caption{Computation time of 10 iterations of Online Mirror Descent in the MFG and of 10 iterations of sampled Counterfactual regret minimization as a function of the number of players $\NumOfPlayer$.}
    \label{fig:computation_time_algo_n_vs_mean_field}
\end{figure}

\subsubsection{The mean field equilibrium policy is a good approximation of the $\NumOfPlayer$-player equilibrium policy whenever $\NumOfPlayer$ is large enough.}

In the Pigou network game, the mean field equilibrium policy is almost a Nash equilibrium in the $\NumOfPlayer$-player game as soon as $\NumOfPlayer$ is larger than 20 players, see \cref{fig:pigou_mean_field_in_n_player}.
This was shown theoretically in \cref{subsec:pigou}, and is confirmed using approximate average deviation incentive of the mean field equilibrium policy in the $\NumOfPlayer$-player game.

\begin{figure}
    \centering
    \includegraphics[width=0.6\linewidth]{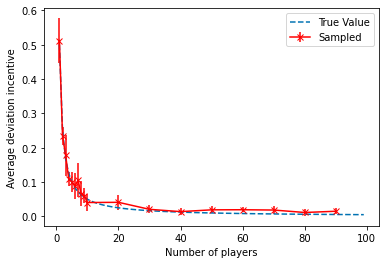}
    \caption{Average deviation incentive of the Nash equilibrium mean field policy in the $\NumOfPlayer$-player game as a function of $\NumOfPlayer$ in the case of the Pigou game.
    The sampled value is the value computed in OpenSpiel by testing all the possible pure best responses, and sampling game trajectories to get the expected returns.}
    \label{fig:pigou_mean_field_in_n_player}
\end{figure}

In the Braess network game, the mean field equilibrium policy is almost a Nash equilibrium in the $\NumOfPlayer$-player game as soon as $\NumOfPlayer$ is larger than 30 players.
The results are reported in the appendix.

\subsection{Mean field game approach can be extended to more complex set ups}

The MFG approach solves the curse of dimensionality in the number of players (whenever the number of possible states is much below the number of players).
It can be extended to more complex setups than the Pigou and Braess networks such as realistic traffic networks with demand.
The results obtained when using the MFG approach in the augmented Braess network game with heterogeneous departure time is reported in the appendix.
This section focuses on the extention of MFG approach to one of the classical benchmark network game used by the traffic community: the Sioux Falls network.

The experiment shows the ability to learn the mean field equilibrium policy on this 76 links network, with 14,000 vehicles going to two different destinations.
Using online mirror descent, we see that the average deviation incentive decreases to 1.55 (for a travel time of 27) over 100 iterations, see \cref{fig:omd_sioux_falls}.
We use a fixed learning rate of 1 in the 30 first iterations of the algorithm, 0.1 in the 31 to the 60 first iterations and a fixed learning rate of 0.01 in the 40 remaining iterations to produce \cref{fig:omd_sioux_falls}.

\begin{figure}
    \centering
    \includegraphics[width=0.6\linewidth]{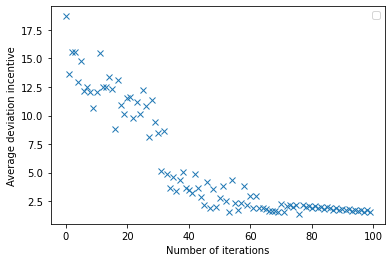}
    \caption{Online mirror descent average deviation incentive in the Sioux Falls MFG as a function of the number of iterations of the descent algorithm.}
    \label{fig:omd_sioux_falls}
\end{figure}

The resulting mean field policy is not exactly the Nash equilibrium policy of the MFG as its average deviation incentive is 1.55 (for a travel time of 27.5).
The game evolution displayed in \cref{fig:sioux_falls_dynamics} shows that some vehicles going from node 19 to node 1 have a longer travel time than others: on time step 26.5 (\cref{subfig:sf_53}) some vehicles have arrived to node 1 (top left) and some have not.

\begin{figure}
    \centering
    \subfloat[]{\includegraphics[width=0.3\linewidth]{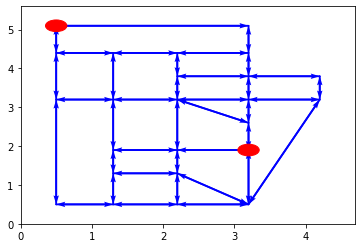}\label{subfig:sf_0}}
    \subfloat[]{\includegraphics[width=0.3\linewidth]{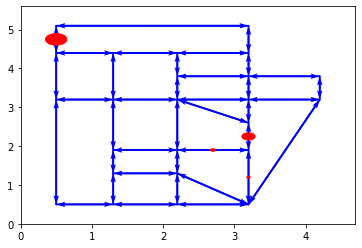}\label{subfig:sf_5}}
    \subfloat[]{\includegraphics[width=0.3\linewidth]{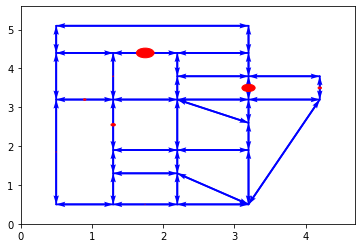}\label{subfig:sf_20}}
    
    \subfloat[]{\includegraphics[width=0.3\linewidth]{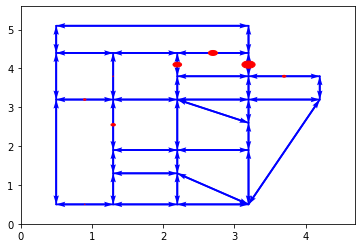}\label{subfig:sf_25}}
    \subfloat[]{\includegraphics[width=0.3\linewidth]{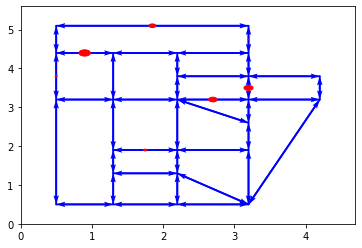}\label{subfig:sf_42}}
    \subfloat[]{\includegraphics[width=0.3\linewidth]{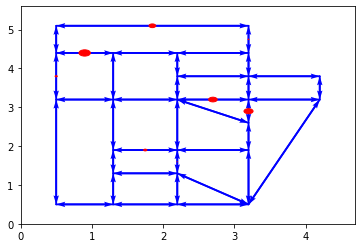}\label{subfig:sf_44}}
    
    \subfloat[]{\includegraphics[width=0.3\linewidth]{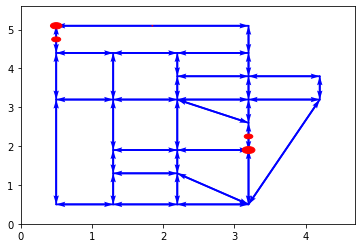}\label{subfig:sf_53}}
    \subfloat[]{\includegraphics[width=0.3\linewidth]{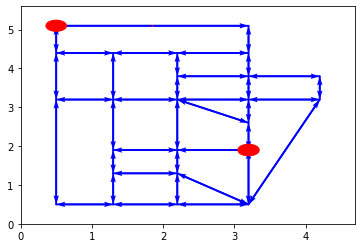}\label{subfig:sf_55}}
    \caption{Dynamics of the Sioux Falls network in the mean field Nash equilibrium. 
    Road network; location of the cars at time 0.0 (\ref{subfig:sf_0}), 2.5 (\ref{subfig:sf_5}), 10.0 (\ref{subfig:sf_20}), 12.5 (\ref{subfig:sf_25}), 21.0 (\ref{subfig:sf_42}), 22.0 (\ref{subfig:sf_44}), 26.5 (\ref{subfig:sf_53}), 27.5 (\ref{subfig:sf_55}). Some vehicles arrived at their destination after some that left the origin at the same time: the Nash equilibrium has not been reached. On average, players can expect saving 1.55 time by being the only one to be rerouted on a better path.}
    \label{fig:sioux_falls_dynamics}
\end{figure}

Average deviation of the learned mean field policy cannot be computed numerically in the 14,000 player game, due to the large number of players.

\section{Conclusion}

When using game theory models where agents make rational choices, to deal with the scabilabity in terms of number of agents, the main advantage of the mean field approach is that mean field Nash equilibria are easier to compute while providing a good surrogate for the equilibrium behavior in games with a finite but large number of players.
Detailed illustrations are provided on several numerical examples in this article.
In particular, besides toy-examples, this approach enables solving dynamic routing from a game perspective on a classical benchmark of the traffic community with 76 links and 14,000 vehicles.

This opens several directions for future work.
A natural next step would be to consider even more realistic scenarios, which means tackling more complex routing models and more complex networks. For example, it would be interesting to allow congestion spill back between links, and to propose a model ensuring the first-in first-out property according to which a vehicle can never exit a link earlier than another vehicle that arrived before them.
This could be done by including the game state, i.e., the whole state distribution, in the transitions and the policies.  
This will raise new challenges and new scalability issues in terms of models and networks.
To cope with this aspect, it would be interesting to combine the mean-field approach proposed in this work with state-of-the art reinforcement learning techniques.

\clearpage

\bibliographystyle{acm} 
\bibliography{bibfile.bib}

\clearpage 
\appendix 

\section{Complement on some theoretical aspects}

\subsection{Proofs}
\begin{proof}[Proof of Theorem~\ref{thm:existence-N-player-eq}]
The proof of the existence of a Nash equilibrium relies on the Kakutani-Glicksberg-Fan theorem \cite{glicksberg1952further} which needs the continuity of the cost function with respect to the policy profile. 

Because $\Vehicles$ and $\Links$ are finite and original waiting times are given, the time between two actions in the $N$-player game is necessary of the form 
$$
\sum_{\linkOne, k} \alpha_{\linkOne,k}\travelTime_{\linkOne}\left(\frac{k}{\NumOfPlayer}\right) + \sum_{\vehicle\in\Vehicles} \beta_\vehicle \randomPlayerWaitingTime_{0}^\vehicle\;,
$$
where $(\alpha_\linkOne)_{\linkOne\in\Links}\in\{-1,0,1\}^\Links,\ k\in\{0,\dots, \NumOfPlayer\}$ and  $\beta_\vehicle\in\{-1,0,1\}^{\NumOfPlayer}$. This generates at most \(3^{|\Links|(\NumOfPlayer+1) + \NumOfPlayer}\) possibilities. 
Therefore, there is a minimum time between two actions. 
As the time horizon $T$ is fixed, this implies that  there is a maximum number of times $M$ where an action can be taken.
Therefore, a policy only needs to define an action on all the possible tuples of times where an action should be taken.
The number $K$ of possible tuples is smaller than ${{3^{|\Links|(\NumOfPlayer+1) + \NumOfPlayer}}\choose{M}}$.
Without loss of generality, we restrict ourselves to the set of policies which is a subset of $\ProbabilityMeasureSet{\Links}^{\Links\times K}$, making the set of pure policies a subset of $\Links^{\Links\times K}$ finite, and the set of mixed-policies (its convex hull) a compact subset of the Euclidean vector space $\R^{\Links\times\Links\times K}$.
We will later show that any Nash equilibrium in the set of restricted mixed-policies is a Nash equilibrium in the set of non restricted mixed-policies.

We denote by $\pathsIndices\subseteq\N$ the set of pure policy indices.
We denote $\SetOfPaths=\{\path_k, k\in\pathsIndices\}$ the set of pure policies.
The set of mixed-policies is a subset of the simplex over the set of pure policy $\ProbabilityMeasureSet{\SetOfPaths}$.
Therefore, for all $ \policy\in\ProbabilityMeasureSet{\SetOfPaths}$, there exists $ \alpha\in\ProbabilityMeasureSet{\pathsIndices}$ identified with a vector of length $|\pathsIndices|$ such that $\policy=\alpha\cdot(\path_k)_{k\in\pathsIndices}$.
We denote by $\RestrictedPolicyProfiles\subseteq\ProbabilityMeasureSet{\SetOfPaths}$ the set of restricted mixed-policies and by $\RestrictedPolicyProfiles = \RestrictedPolicies^\NumOfPlayer$ the set of restricted mixed-policy profiles.

\begin{lemma}[Cost is convex combination of pure strategy cost]\label{cor:combination}
For any $\policyProfile\in S$, and for any $\vehicle\in\Vehicles$ with corresponding $\alpha\in\ProbabilityMeasureSet{\pathsIndices}$ such that $\policyProfile^{\vehicle}=\alpha\cdot(\path_k)_{k\in\pathsIndices}$
\[
    \cost^\NumOfPlayer_\vehicle(\policyProfile^{\vehicle}, \policyProfile^{-\vehicle}) = \sum_{k\in\pathsIndices} \alpha_k \cost^\NumOfPlayer_\vehicle(q_k, \policyProfile^{-\vehicle}).
\]
\end{lemma}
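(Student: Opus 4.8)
The plan is to leverage the reduction already carried out in the proof of \cref{thm:existence-N-player-eq}: once we restrict to the set $S$ of restricted mixed-policy profiles, a mixed policy for a single player is nothing but a probability vector $\alpha\in\ProbabilityMeasureSet{\pathsIndices}$ over the \emph{finite} family of pure policies $(\path_k)_{k\in\pathsIndices}$, and the statement ``player $\vehicle$ uses $\policyProfile^\vehicle=\alpha\cdot(\path_k)_{k\in\pathsIndices}$'' is to be read as: player $\vehicle$ draws a single index $K^\vehicle\sim\alpha$ at time $0$ and then commits to the pure policy $\path_{K^\vehicle}$ for the whole game. With this reading, the expectation defining $\cost^\NumOfPlayer_\vehicle(\policyProfile^\vehicle,\policyProfile^{-\vehicle})$ carries only two sources of randomness, which are independent: the draw $K^\vehicle\sim\alpha$ of player $\vehicle$, and the draws made by the other players according to $\policyProfile^{-\vehicle}$.

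First I would condition on the realization of all these draws. Conditionally on $K^\vehicle=k$ and on the other players' draws, every player follows a deterministic pure policy, so the entire game trajectory $\randomGameStateTraj$ is deterministic; in particular so are player $\vehicle$'s trajectory $\randomPlayerStateTraj^\vehicle$ and the quantity $\int_{\SetOfTime}\runCost(\randomPlayerState^\vehicle_t)\,dt$. Averaging this over the other players' draws only --- an operation that, by independence, is unaffected by the conditioning on $K^\vehicle=k$ --- produces exactly $\cost^\NumOfPlayer_\vehicle(\path_k,\policyProfile^{-\vehicle})$, by the very definition of the cost incurred by player $\vehicle$ when playing the pure policy $\path_k$ against $\policyProfile^{-\vehicle}$. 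The tower property then yields
\[
\cost^\NumOfPlayer_\vehicle(\policyProfile^\vehicle,\policyProfile^{-\vehicle})
= \sum_{k\in\pathsIndices}\mathbb{P}(K^\vehicle=k)\;\mathbb{E}\!\left[\int_{\SetOfTime}\runCost(\randomPlayerState^\vehicle_t)\,dt \ \middle|\ K^\vehicle=k\right]
= \sum_{k\in\pathsIndices}\alpha_k\;\cost^\NumOfPlayer_\vehicle(\path_k,\policyProfile^{-\vehicle}),
\]
which is the desired identity.

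The point I would emphasize --- and the only genuine obstacle, conceptual rather than computational --- is that, in contrast with the mean field regime, player $\vehicle$'s own policy \emph{does} influence the empirical location distribution $\linksDistribution^\NumOfPlayer$, hence the congestion, hence player $\vehicle$'s own travel time, so a priori $\cost^\NumOfPlayer_\vehicle$ need not be affine in $\policyProfile^\vehicle$. The resolution is precisely that a mixed policy here is a \emph{single} coin flipped at time $0$ rather than an independent per-step randomization: conditionally on that coin, the congestion feedback experienced by player $\vehicle$ through $\linksDistribution^\NumOfPlayer$ is already baked into each $\cost^\NumOfPlayer_\vehicle(\path_k,\cdot)$, and the sole remaining operation --- averaging over that single coin --- is linear. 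Concretely, one must take care not to confuse $\policyProfile^\vehicle=\alpha\cdot(\path_k)_k$, read as a mixture over pure policies, with the behavioral (stationary feedback) strategy obtained by mixing the $\path_k$ coordinatewise in $\ProbabilityMeasureSet{\Links}^{\Links\times K}$; for the latter the identity of the lemma would in general fail.
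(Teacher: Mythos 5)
Your proof is correct and is essentially an expanded version of the paper's own (one-line) argument, which simply invokes linearity of expectation with respect to the mixing distribution over pure policies. Your additional remark distinguishing the mixture-over-pure-policies reading from a coordinatewise behavioral mixing is a worthwhile clarification, but the underlying approach is the same.
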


\begin{proof}
The equality is a direct consequence of the linearity of the expected value with respect to the probability distribution.
\end{proof}

Next, let the (set-valued) Best response map $\phi: \RestrictedPolicyProfiles \to \PowerSet{\RestrictedPolicyProfiles}$ be defined by for all: for $\policyProfile \in \RestrictedPolicyProfiles$,
\[
    \phi(\policyProfile) = \bigtimes_{\vehicle\in\Vehicles}\argmin_{\policy'\in \RestrictedPolicies}\cost^\NumOfPlayer_\vehicle (\policy', \policyProfile^{-\vehicle}).
\]
\begin{lemma}[Best response map has a closed graph]\label{cor:closed_graph}
$\phi$ has a closed graph.
\end{lemma}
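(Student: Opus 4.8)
The plan is to establish the closed-graph property through the standard sequential argument, the only real input being the continuity of the cost in the policy profile. Since $\RestrictedPolicyProfiles = \RestrictedPolicies^\NumOfPlayer$ is a compact subset of a finite-dimensional Euclidean space, it suffices to check that whenever $\policyProfile^{(n)} \to \policyProfile$ in $\RestrictedPolicyProfiles$, $\psi^{(n)} \in \phi(\policyProfile^{(n)})$, and $\psi^{(n)} \to \psi$, one has $\psi \in \phi(\policyProfile)$.

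First I would record the (joint) continuity of each $\cost^\NumOfPlayer_\vehicle$ on $\RestrictedPolicyProfiles$. Iterating \cref{cor:combination} over the players expresses $\cost^\NumOfPlayer_\vehicle(\policyProfile)$ as a multilinear function of the vectors of mixing weights $(\alpha^1,\dots,\alpha^\NumOfPlayer) \in \ProbabilityMeasureSet{\pathsIndices}^\NumOfPlayer$ over the finite set of pure policies $\SetOfPaths$, with coefficients the (finite, fixed) pure-profile costs $\cost^\NumOfPlayer_\vehicle(\path_{k_1},\dots,\path_{k_\NumOfPlayer})$; a multilinear map on a product of simplices extends to a polynomial on a Euclidean space, hence is continuous. (Equivalently, this is exactly the standing continuity assumption of the theorem.)

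Then I would unfold the product structure of $\phi$. By definition, $\psi^{(n)} \in \phi(\policyProfile^{(n)})$ means that for every player $\vehicle$ and every $\policy' \in \RestrictedPolicies$,
\[
    \cost^\NumOfPlayer_\vehicle(\psi^{\vehicle,(n)}, \policyProfile^{-\vehicle,(n)}) \le \cost^\NumOfPlayer_\vehicle(\policy', \policyProfile^{-\vehicle,(n)}).
\]
Componentwise convergence $\policyProfile^{(n)} \to \policyProfile$ gives $\psi^{\vehicle,(n)} \to \psi^\vehicle$ and $\policyProfile^{-\vehicle,(n)} \to \policyProfile^{-\vehicle}$, so passing to the limit $n \to \infty$ and using the continuity just established yields $\cost^\NumOfPlayer_\vehicle(\psi^\vehicle, \policyProfile^{-\vehicle}) \le \cost^\NumOfPlayer_\vehicle(\policy', \policyProfile^{-\vehicle})$ for every $\policy' \in \RestrictedPolicies$. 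Hence $\psi^\vehicle \in \argmin_{\policy' \in \RestrictedPolicies} \cost^\NumOfPlayer_\vehicle(\policy', \policyProfile^{-\vehicle})$ for each $\vehicle \in \Vehicles$, i.e. $\psi \in \phi(\policyProfile)$, which is the claim.

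I do not expect a serious obstacle here: this is precisely the closed-graph step of the Berge maximum theorem, which is routine because $\RestrictedPolicies$ is compact and finite-dimensional and the cost is jointly continuous. The one point that genuinely requires the prior reduction to $\RestrictedPolicyProfiles$ — rather than the full, infinite-dimensional $\policyProfiles$ — is that $\RestrictedPolicies$ must be compact so that each $\argmin$ is nonempty and $\phi$ is a well-defined correspondence with values in $\PowerSet{\RestrictedPolicyProfiles}$; this is exactly what restricting to finitely many pure policies and taking the convex hull provides, and it is also what will let us invoke the Kakutani–Fan–Glicksberg theorem afterwards to get a fixed point of $\phi$.
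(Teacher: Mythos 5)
Your proof is correct and follows essentially the same sequential closed-graph argument as the paper: take converging sequences, write the argmin condition as a family of cost inequalities, and pass to the limit using the continuity of the cost in the policy profile. The only cosmetic difference is that the paper phrases the right-hand side as a convex combination over pure policies via \cref{cor:combination}, while you quantify directly over all $\policy' \in \RestrictedPolicies$; these are equivalent.
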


\begin{proof}
For any sequence $(\policyProfile_j)_{j\in\N}\in \RestrictedPolicyProfiles^\N$ converging to $\policyProfile_\infty$ and any sequence $(x_j)_{j\in\N}\in \RestrictedPolicyProfiles^\N$ converging to $x_\infty$ such that $x_j\in \phi(\policyProfile_j)$ for all $j\in\N$, we have that, for any $\vehicle\in\Vehicles$:
\begin{align*}
    \cost^\NumOfPlayer_\vehicle(x_{j}^{\vehicle}, \policyProfile_{j}^{-\vehicle}) &\leq \alpha 
    \cdot \big(\cost^\NumOfPlayer_\vehicle(\path_k, \policyProfile_{j}^{-\vehicle})\big)_{k\in\pathsIndices} && \forall \alpha\in\ProbabilityMeasureSet{\pathsIndices},
\end{align*}
which holds in the limit provided that $\cost$ is continuous w.r.t. the policy profile (our regularity assumption).
Therefore $x_\infty\in\phi(\policyProfile_\infty)$ and $\phi$ has a closed graph.
\end{proof}

\begin{proposition}[Best response map has a fixed-point]
The map $\phi$  
is a Kakutani-Glicksberg-Fan map, therefore it admits a fixed-point \cite{glicksberg1952further}, which is a Nash equilibrium of the $N$-player game.
\end{proposition}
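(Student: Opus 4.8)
The plan is to check the three hypotheses of the Kakutani-Glicksberg-Fan theorem for $\phi$ and then to read off a Nash equilibrium from a fixed point. First I would note that the domain $\RestrictedPolicyProfiles = \RestrictedPolicies^\NumOfPlayer$ is nonempty, convex and compact: by the reduction carried out above, $\RestrictedPolicies$ is a product, over the finitely many admissible (location, time-tuple) inputs $\Links \times \pathsIndices$, of probability simplices $\ProbabilityMeasureSet{\Links}$, hence a compact convex polytope in a finite-dimensional Euclidean space, and a finite product of such polytopes is again one. Equivalently, $\RestrictedPolicies$ is the convex hull of the finite set of pure policies $\SetOfPaths$, and $\RestrictedPolicyProfiles$ the product of $\NumOfPlayer$ copies of it.

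Next I would verify that $\phi$ has nonempty convex values. Nonemptiness is immediate: for each $\vehicle$ and each fixed $\policyProfile^{-\vehicle}$, the map $\policy' \mapsto \cost^\NumOfPlayer_\vehicle(\policy', \policyProfile^{-\vehicle})$ is continuous on the compact set $\RestrictedPolicies$ (our standing regularity assumption), so it attains its minimum, and the product over $\vehicle \in \Vehicles$ of nonempty sets is nonempty. Convexity is where \cref{cor:combination} enters: writing $\policy' = \alpha \cdot (\path_k)_{k \in \pathsIndices}$, that lemma gives $\cost^\NumOfPlayer_\vehicle(\policy', \policyProfile^{-\vehicle}) = \sum_{k \in \pathsIndices} \alpha_k \cost^\NumOfPlayer_\vehicle(\path_k, \policyProfile^{-\vehicle})$, a linear function of the mixing weights $\alpha$; hence its set of minimizers over the simplex $\ProbabilityMeasureSet{\pathsIndices}$ is the face spanned by the cost-minimizing pure policies, which is convex, and a product of convex sets is convex. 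The closed-graph property is already supplied by \cref{cor:closed_graph}. Thus $\phi$ satisfies all the hypotheses of the Kakutani-Glicksberg-Fan theorem and possesses a fixed point $\policyProfile^\star \in \phi(\policyProfile^\star)$.

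Then I would translate the fixed point into a Nash equilibrium. By definition of $\phi$, the relation $\policyProfile^\star \in \phi(\policyProfile^\star)$ means that for every $\vehicle \in \Vehicles$, $\policy^{\vehicle\star} \in \argmin_{\policy' \in \RestrictedPolicies} \cost^\NumOfPlayer_\vehicle(\policy', \policy^{-\vehicle\star})$, i.e. no player can strictly reduce their cost by deviating within $\RestrictedPolicies$. It remains to upgrade this to the full policy space $\Policies$, i.e. to prove the claim deferred above that a Nash equilibrium among restricted policies is a Nash equilibrium among all policies. The argument is that, against the fixed profile $\policy^{-\vehicle\star}$, the set of times at which player $\vehicle$ can possibly act is contained in the finite time grid generated by the values $\travelTime_\linkOne(k/\NumOfPlayer)$ and the initial waiting times, and at each such time the input fed to player $\vehicle$'s strategy ranges over the finite set of admissible (location, time-tuple) pairs; hence any $\policy \in \Policies$ can be replaced, without changing the law of player $\vehicle$'s trajectory or the resulting cost, by a restricted policy agreeing with it on those reachable inputs. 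Consequently $\inf_{\policy \in \Policies} \cost^\NumOfPlayer_\vehicle(\policy, \policy^{-\vehicle\star}) = \min_{\policy' \in \RestrictedPolicies} \cost^\NumOfPlayer_\vehicle(\policy', \policy^{-\vehicle\star}) = \cost^\NumOfPlayer_\vehicle(\policy^{\vehicle\star}, \policy^{-\vehicle\star})$, so $\policyProfile^\star$ is a Nash equilibrium of the unrestricted $\NumOfPlayer$-player game.

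The step I expect to be the main obstacle is the last one: making fully rigorous that restricting attention to policies defined only on the finitely many reachable (location, time-tuple) inputs costs no generality. One has to argue carefully that, when the opponents play a restricted profile, the times at which the deviating player can act stay on the finite grid described above, so a deviation cannot exploit information unavailable to restricted policies, and that the induced restricted policy is well defined and consistent across the reachable states. By contrast, checking the Kakutani-Glicksberg-Fan hypotheses is routine once \cref{cor:combination} and \cref{cor:closed_graph} are in hand.
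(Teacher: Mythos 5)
Your proposal is correct and follows essentially the same route as the paper: verify that $\RestrictedPolicyProfiles$ is nonempty, compact and convex, that $\phi$ has nonempty convex values via the linearity given by \cref{cor:combination}, invoke \cref{cor:closed_graph} for the closed graph, apply Kakutani-Glicksberg-Fan, and then lift the restricted Nash equilibrium to the full policy space $\Policies$. Your treatment of the last lifting step is, if anything, slightly more explicit than the paper's, which compresses it into an appeal to \cref{cor:combination} and the construction of $\RestrictedPolicies$.
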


\begin{proof}\

The set $\RestrictedPolicyProfiles$ is:
\begin{itemize}
    \item non-empty (we assume the graph is non empty, so at least one path exists)
    \item compact (as a probability distribution over finite set)
    \item convex (as a probability distribution over finite set)
    \item subset of a Hausdorff locally convex topological vector space (as a probability distribution over finite set)
\end{itemize}
The function $\phi(\policyProfile)$ is non-empty and convex for all $\policyProfile\in\policyProfiles$ (because the minimization is a linear problem).
The function $\phi$ has a closed graph (\cref{cor:closed_graph}). Hence, by Kakutani-Glicksberg-Fan theorem \cite{glicksberg1952further}, $\phi$ has a fixed point.

\end{proof}

The proof of Theorem~\ref{thm:existence-N-player-eq} is concluded by noting that a fixed point $\policyProfile^\star\in \RestrictedPolicyProfiles$ of $\phi$ is a Nash equilibrium in $\policyProfiles$. Indeed, by definition of $\phi$:
\begin{align*}
    &\forall \vehicle\in\Vehicles,\ \forall \policy\in \RestrictedPolicies, \ 
    \cost^\NumOfPlayer_\vehicle(\policyProfile^{\vehicle\star}, \policyProfile^{-\vehicle\star}) \leq \cost^\NumOfPlayer_\vehicle(\policy, \policyProfile^{-\vehicle\star}).
\end{align*}
From here, by \cref{cor:combination} and by construction of $\RestrictedPolicies$:
\begin{align*}
    &\forall \vehicle\in\Vehicles,\ \forall \policy\in \Policies,\ \cost^\NumOfPlayer_\vehicle(\policyProfile^{\vehicle\star}, \policyProfile^{-\vehicle\star}) \leq \cost^\NumOfPlayer_\vehicle(\policy, \policyProfile^{-\vehicle\star}).
\end{align*}
The last line states that the Nash equilibrium in the set $\RestrictedPolicyProfiles$ is a Nash equilibrium in the set $\policyProfiles$ and finishes the proof of existence of the Nash equilibrium.

\end{proof}

\begin{proof}[Proof of Theorem~\ref{thm:existence-MFNE}]
This proof is similar to the proof of Theorem~\ref{thm:existence-N-player-eq} as long as the cost function is continuous with respect to the policy profile, which is assumed here.
In the mean field game, given a departure time the set of pure policies can be restricted to the set of path $\SetOfPaths^{\text{supp}(\randomPlayerWaitingTime_0)}$ given the departure time $\randomPlayerWaitingTime_0$, with $K$ the number of possible path (with less than a given number of cycles, which is possible due to minimum travel time on each link and finite time horizon) times the number of possible departure time.
Therefore a pure policy, should be understood as choosing a path $\path\in\SetOfPaths$ given a departure time $\randomPlayerWaitingTime_0$.
Encoding the policy in $\Policies$ that correspond to any choice of a path given a departure time might require some notation work, that we will not do here.
The reader should understand that the proof relies on finding a Nash equilibrium where the set of policies is the set of probability distributions on the set of paths given the departure time, and then showing that this Nash equilibrium policy can be translated into a policy in $\Policies$, and then showing that in the set of $\Policies$ the policy is still a Nash equilibrium.
To establish the existence of a Nash equilibrium in the set of probability distribution on the set of paths given a departure time $S=\ProbabilityMeasureSet{\SetOfPaths^{\text{supp}(\randomPlayerWaitingTime_0)}}$, 
the same arguments that the ones in the proof of Theorem~\ref{thm:existence-N-player-eq} can be used with the map $\phi: S \to \PowerSet{S}$ defined by:
\[\phi(\policy) = \bigtimes_{\vehicle\in\Vehicles}\argmin_{\policy'\in S}\cost(\policy', \policy)\]
which is a Kakutani-Fan-Glisckberg map.
Then, one can convert the path choice given a departure time to a list of actions in time, therefore convert it to a policy $\policy^\star$ in $\Policies$.
Then, one can show by contradiction that they cannot exist a policy $\policy$ in $\Policies$ that gives a strictly better outcome than the policy $\policy^\star$ in the cost function $\cost(\policy, \policy^\star)$.
Therefore $\policy^\star$ is also a Nash equilibrium in the set $\Policies$.
\end{proof}

\subsection{Counter-example of the existence without continuity}
Here we present a counter-example of the existence of a Nash equilibrium of the mean field game when congestion functions are not continuous.

Consider a network with two links, say $\linkOne$, $\linkTwo$, connecting the origin and destination links.  Let the congestion function be fined as:
\begin{align*}
    \travelTime_{\linkOne}(\mu) &= 
    \begin{cases}
        1 & \hbox{ if } \mu < 0.5 \\
        2 & \hbox{ otherwise}
    \end{cases}\\
    \travelTime_{\linkTwo}(\mu) &= 
    \begin{cases}
        1 & \hbox{ if } \mu \leq 0.5 \\
        2 & \hbox{ otherwise}
    \end{cases}
\end{align*}
Then the mean field game admits no Nash equilibrium.
Indeed, if $\linksDistribution_t(\linkOne)<0.5$, then $\travelTime_{\linkOne}=1$ and $\travelTime_{\linkTwo}=2$, and if $\linksDistribution_t(\linkOne)\geq 0.5$, then $\travelTime_{\linkOne}=2$ and $\travelTime_{\linkTwo}=1$.
In this case there is no flow allocation such that travel times are equal on the path that are used, therefore there is no Nash equilibrium.
This is due to the discontinuity of the cost of pure actions with respect to the state distribution.

\begin{proof}[Proof of Theorem~\ref{thm:equatlization-travel-time}]
\Cref{cor:combination} is still valid in the mean field game setup, with the set of pure policies corresponding to a choice of paths.
This implies that if the equilibrium policy $\policyProfile^\star$ is a mix-policy; i.e. there exist $ \pathsIndices^\star \subset \pathsIndices$ and $\alpha\in\ProbabilityMeasureSet{\pathsIndices}$ such that:
\begin{align*}
    \policyProfile^\star &= \alpha \cdot (\path_k)_{k\in\pathsIndices}
    \\ \qquad\alpha_k &> 0 \qquad \forall k\in \pathsIndices^\star 
    \\ \alpha_k &= 0 \qquad \forall k\in \pathsIndices\backslash\pathsIndices^\star.
\end{align*}
Then $\cost(\policyProfile^\star, \policyProfile^\star) = \cost(\path_k, \policyProfile^\star)$ for all $k\in\pathsIndices^\star$.
In the mean field game, this translates in the equality of the travel time on all path $\path_k$ with $k\in\pathsIndices^\star$.
\end{proof}

Remark that in the $\NumOfPlayer$ player game, $\cost^\NumOfPlayer_\vehicle(\path_k, \policyProfile^\star_{-\vehicle}) = \cost^\NumOfPlayer_\vehicle(\path_\linkOne, \policyProfile^\star_{-\vehicle})$ does not translate into the equality of the trajectories following the pure policies $\path_k$ and $\path_\linkOne$ as the travel time depends on the vehicle $\vehicle$ location.
This is not the case in the mean field game.

\begin{proof}
\textsc{Proof of average deviation incentive of the Pigou mean field equilibrium policy in the corresponding N player game.}

For the point of view of one player, if the other players are following the mean field equilibrium policy, then the probability that $m$ other players are on the link $\linkTwo$ is ${N-1\choose m}0.5^{N-1}$.
In this case, the travel time on link $\linkTwo$ is $2 \frac{mT}{N}$ if the player does not use it, or $2 \frac{(m+1)T}{N}$ if the player uses it.
If the player follows the mean field equilibrium policy, with 50\% it will use the link $\linkOne$ and have a deviation incentive of $0.5 T \max \{0, 1-2 \frac{m+1}{N}\}$, and with 50\% it will use the link $\linkTwo$ and have a deviation incentive of $0.5 T \max \{0, 2 \frac{m+1}{N}-1\}$.
Therefore, the player deviation incentive is:
\begin{align*}
    \deviationIncentive
    &= 0.5 T \sum_{m=1}^{N-1} {N-1\choose m} 0.5^{N-1} \Big(.5 \max \{0, 2 \frac{m+1}{N}-1\} 
    \\
    &\qquad\qquad\qquad + 0.5 \max \{0, 1-2 \frac{m+1}{N}\}\Big)
    \\
    &= \frac{T}{N 2^{N}}\sum_{m=1}^N{N-1\choose m} \max\{\frac{N}{2}-m-1, m+1-\frac{N}{2}\}
\end{align*}

\end{proof}

\section{Average deviation incentive of the Braess mean field equilibrium policy in the corresponding $N$-player game}
In the Braess network, the mean field policy is almost a Nash equilibrium in the $\NumOfPlayer$-player game as soon as $\NumOfPlayer$ is larger than 30 players \cref{fig:braess_mean_field_in_n_player}.
The computation of the average deviation incentive is done with an explicit numeric computation, using a adaptive time discretisation.
The cost of a player in the mean field game is $3.75$, as shown on \cref{fig:braess_dynamics}, therefore a $0.05$ average deviation incentive is a excellent approximation of the $N$-player Nash equilibrium.

\begin{figure}[h]
    \centering
    \includegraphics[width=0.6\linewidth]{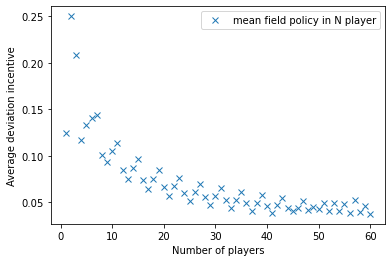}
    \caption{Average deviation incentive of the Nash equilibrium mean field policy in the $\NumOfPlayer$-player game as a function of $\NumOfPlayer$ in the case of the Braess game.
    The average deviation incentive is computed numerically explicitly using a adaptive time discretisation without OpenSpiel.
    }
    \label{fig:braess_mean_field_in_n_player}
\end{figure}
 
\section{Mean field game with player heterogeneity in their origin, destination and departure time}

The game evolution in the mean field equilibrium policy in the augmented Braess network game is shown on \cref{fig:aug_braess_dynamics}.

\begin{figure}[h]
    \centering
    \subfloat[]{\includegraphics[width=0.3\linewidth]{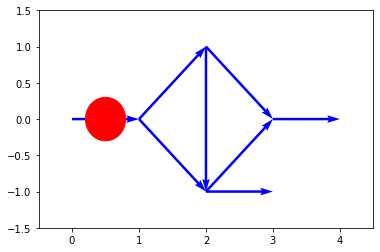}\label{subfig:ab_t_0}}
    \subfloat[]{\includegraphics[width=0.3\linewidth]{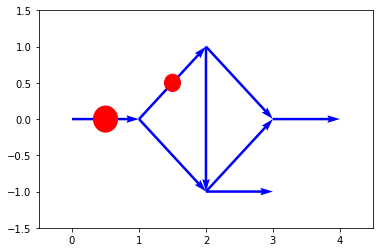}\label{subfig:ab_t_1}}
    \subfloat[]{\includegraphics[width=0.3\linewidth]{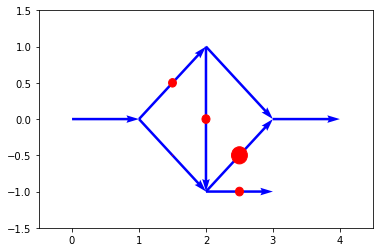}\label{subfig:ab_t_9}}
    
    \subfloat[]{\includegraphics[width=0.3\linewidth]{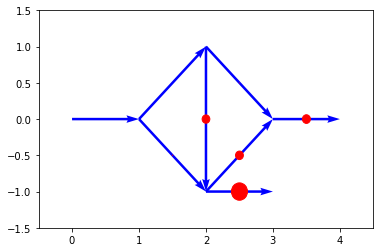}\label{subfig:ab_t_10}}
    \subfloat[]{\includegraphics[width=0.3\linewidth]{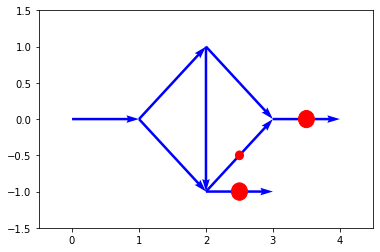}\label{subfig:ab_t_14}}
    \subfloat[]{\includegraphics[width=0.3\linewidth]{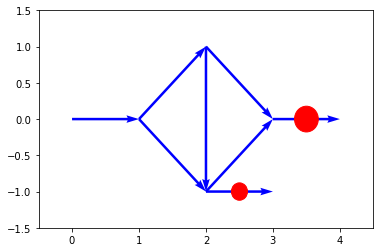}\label{subfig:ab_t_15}}
    \caption{Dynamics of the Braess network augmented with two destination in a set up with several departure time. Location of the cars at time 0.0 (\ref{subfig:ab_t_0}), 0.25 (\ref{subfig:ab_t_1}), 2.25 (\ref{subfig:ab_t_9}), 2.5 (\ref{subfig:ab_t_10}), 3.5 (\ref{subfig:ab_t_14}) and 3.75 (\ref{subfig:ab_t_15}). Vehicles that departs at the same time from the same origin and the same destination reach their destination at the same time, encoding the Nash equilibrium of the MFG.}
    \label{fig:aug_braess_dynamics}
\end{figure}
 
\section{Reproducing the experiments}

Experiment can be reproduced using the files in
the github \url{https://github.com/deepmind/open_spiel/tree/master/open_spiel/data/paper_data/routing_game_experiments} following the instructions in the readme.md file.

\end{document}